\theoremstyle{plain}
\newtheorem{theorem}{Theorem}[section]
\newtheorem{lemma}[theorem]{Lemma}
\newtheorem*{lamptheorem}{Theorem \ref*{lampthm}}
\newtheorem{corollary}[theorem]{Corollary}
\newtheorem{proposition}[theorem]{Proposition}
\theoremstyle{definition}
\newtheorem{example}[theorem]{Example}
\theoremstyle{remark}
\newtheorem*{remark}{Remark}
\newcommand{\K}{\ensuremath{K}}
\newcommand{\R}{\ensuremath{\mathbb{R}}}
\newcommand{\Z}{\ensuremath{\mathbb{Z}}}
\newcommand{\Q}{\ensuremath{\mathbb{Q}}}
\newcommand{\N}{\ensuremath{\mathbb{N}}}
\newcommand{\C}{\ensuremath{\mathbb{C}}}
\newcommand{\F}{\ensuremath{\mathbb{F}}}
\newcommand{\bfu}{\ensuremath{\mathbf{U}}}
\newcommand{\bfa}{\ensuremath{\mathbf{A}}}
\newcommand{\bfb}{\ensuremath{\mathbf{B}}}
\newcommand{\bft}{\ensuremath{\mathbf{T}}}
\newcommand{\bfg}{\ensuremath{\mathbf{G}}}
\newcommand{\torus}{\bft}
\newcommand{\mcv}{\ensuremath{\mathcal{V}}}
\providecommand{\cyc}[1]{\left\langle#1\right\rangle}
\providecommand{\set}[1]{\left\lbrace#1\right\rbrace}
\DeclareMathOperator{\Hom}{Hom}
\DeclareMathOperator{\Aut}{Aut}
\DeclareMathOperator{\Comm}{Comm}
\DeclareMathOperator{\Der}{Der}
\DeclareMathOperator{\VDer}{VDer} 
\DeclareMathOperator{\rank}{rank}
\DeclareMathOperator{\GL}{GL}
\newcommand{\cog}{\ensuremath{\Comm_{\Gamma_0}(\Gamma)}}
\newcommand{\zug}{\ensuremath{Z_\bfu(\Gamma)}}
\newcommand\restr[2]{{
  \left.\kern-\nulldelimiterspace 
  #1 
  \vphantom{\big|} 
  \right|_{#2} 
  }}
\newcommand\suchthat{ \mathrel{}\middle| \mathrel{}}
\renewcommand{\bold}[1]{\medskip \noindent {\bf #1 }\nopagebreak}
\begin{document}

\title{Commensurators of solvable $S$-arithmetic groups}
\author{Daniel Studenmund} \date{\today}
\maketitle

\begin{abstract}
  We show that the abstract commensurator of an $S$-arithmetic
  subgroup of a solvable algebraic group over $\Q$ is isomorphic to
  the $\Q$-points of an algebraic group, and compare this with
  examples of nonlinear abstract commensurators of $S$-arithmetic
  groups in positive characteristic. In particular, we include a
  description of the abstract commensurator of the lamplighter group
  $(\Z/2\Z) \wr \Z$.
\end{abstract}

\section{Introduction}

\bold{Overview.} In this paper we show that the abstract commensurator
of an $S$-arithmetic subgroup of a solvable $\Q$-group is isomorphic
to the $\Q$-points of an algebraic group. We then include examples to
show that the analogous result in positive characteristic does not
hold. As part of these examples, we provide a description of the
abstract commensurator of the lamplighter group.

\bold{Background.} A {\em $\Q$-group} $\bfg$ is a linear algebraic
group defined over $\Q$. For $S$ any finite set of prime numbers, let
$\bfg(S)$ denote the set of {\em $S$-integer points} of $G$; that is,
those matrices in $\bfg(\Q)$ whose entries have denominators with
prime divisors belonging to $S$. A subgroup of $\bfg(\Q)$ is {\em
  $S$-arithmetic} if it is commensurable with $\bfg(S)$. When $S =
\emptyset$, an $S$-arithmetic group is called an {\em arithmetic}
group.

The {\em abstract commensurator} of a group $\Gamma$, denoted
$\Comm(\Gamma)$, is the group of equivalence classes isomorphisms
between finite-index subgroups of $\Gamma$, where two isomorphisms are
equivalent if they agree on a finite-index subgroup of $\Gamma$.

The starting point for our work is the following result, immediate
from the fact that $S$-arithmetic subgroups of $\Q$-groups are
preserved by isomorphism of their ambient $\Q$-groups; see
\cite[Thm5.9, pg269]{platrap}. Let $\Aut_\Q(\bfg)$ denote the group of
$\Q$-defined automorphisms of $\bfg$.
\begin{proposition} \label{qautoms} Suppose $\bfg$ is any
  $\Q$-group. For any finite set of primes $S$, there is
  a natural map $\Theta : \Aut_\Q(\bfg) \to \Comm( \bfg(S) )$.
\end{proposition}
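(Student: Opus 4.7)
The plan is to construct $\Theta$ by restricting each $\alpha\in\Aut_\Q(\bfg)$ to a suitable finite-index subgroup of $\bfg(S)$ on which the restriction lands back in $\bfg(S)$, and then to verify that the assignment $\alpha \mapsto [\alpha]$ descends to a group homomorphism into $\Comm(\bfg(S))$.

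First, given $\alpha\in\Aut_\Q(\bfg)$, I would consider the image $\alpha(\bfg(S))\subset \bfg(\Q)$. Since $\alpha$ is a $\Q$-defined automorphism of $\bfg$, the cited fact (Platonov--Rapinchuk, Thm.~5.9) that $S$-arithmetic subgroups are preserved under $\Q$-isomorphisms of the ambient $\Q$-group tells us that $\alpha(\bfg(S))$ is again $S$-arithmetic in $\bfg$, hence commensurable with $\bfg(S)$. Setting $\Lambda := \bfg(S)\cap \alpha(\bfg(S))$ therefore yields a finite-index subgroup of $\bfg(S)$, and $\alpha^{-1}(\Lambda)$ has finite index in $\bfg(S)$ as well. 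The restriction
\[
\alpha|_{\alpha^{-1}(\Lambda)} : \alpha^{-1}(\Lambda) \xrightarrow{\ \cong\ } \Lambda
\]
is then an isomorphism between two finite-index subgroups of $\bfg(S)$, and I would define $\Theta(\alpha)$ to be its equivalence class in $\Comm(\bfg(S))$.

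Next I would verify that $\Theta$ is well defined: if $\Lambda'\subset \Lambda$ is any smaller finite-index subgroup of $\bfg(S)$ contained in $\alpha(\bfg(S))$, then $\alpha|_{\alpha^{-1}(\Lambda')}$ and $\alpha|_{\alpha^{-1}(\Lambda)}$ agree on the finite-index subgroup $\alpha^{-1}(\Lambda')$, so they represent the same element of $\Comm(\bfg(S))$. To check that $\Theta$ is a homomorphism, given $\alpha,\beta\in\Aut_\Q(\bfg)$ I would choose a finite-index subgroup $\Delta\subset \bfg(S)$ small enough that $\beta(\Delta)\subset \bfg(S)$ and $\alpha(\beta(\Delta))\subset \bfg(S)$; such a $\Delta$ exists by the commensurability facts above. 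Then $(\alpha\circ\beta)|_\Delta$ represents both $\Theta(\alpha\beta)$ and $\Theta(\alpha)\Theta(\beta)$, so the two agree in $\Comm(\bfg(S))$.

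The main potential obstacle is ensuring that all the set-theoretic intersections and preimages are genuinely of finite index in $\bfg(S)$, but this is exactly what the cited result from \cite{platrap} guarantees; once one accepts that, the construction of $\Theta$ and its multiplicativity are essentially bookkeeping. The naturality statement in the proposition then amounts to the observation that the construction of $\Theta$ uses only the action of $\Aut_\Q(\bfg)$ on $\bfg(\Q)$ by $\Q$-rational maps, which intertwines in the evident way with restriction of an $S$-arithmetic subgroup along an automorphism.
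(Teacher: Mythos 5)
Your proof is correct and matches the paper's approach: the paper itself labels the proposition as ``immediate from'' the cited Platonov--Rapinchuk theorem, and your argument is exactly the expected fleshing-out of that one-liner, using the fact that a $\Q$-automorphism sends $\bfg(S)$ to a commensurable $S$-arithmetic subgroup and then doing the standard bookkeeping with intersections, preimages, and restriction to define and verify the homomorphism $\Theta$.
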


In the case that $\bfg$ is a higher-rank, connected, adjoint,
semisimple linear algebraic group that is simple over $\Q$, rigidity
theorems of Margulis \cite{margulis} imply that the map $\Theta$ of
Proposition \ref{qautoms} is an isomorphism. Similarly, if $\bfg$ is
unipotent then $\Theta$ is an isomorphism by Mal'cev rigidity; see
Theorem \ref{unipcomm}. Moreover, in each of these cases the group
$\Aut(\bfg)$ has the structure of a $\Q$-group such that
$\Aut_{\Q}(\bfg) \cong \Aut(\bfg)(\Q)$.

\bold{Main result.} When $\bfg$ is solvable and not unipotent the
group $\bfg(S)$ is not rigid in the above sense. One approach to
remedying this lack of rigidity is taken in \cite{wittesarith}, where
solvable $S$-arithmetic groups are shown to satisfy a form of
archimedean superrigidity. For solvable arithmetic groups, another
study of this failure of rigidity appears in
\cite{grunewaldplatonov}. Extending these methods, we prove the main
theorem of this paper:
\begin{theorem}\label{mainthm}
  Let $\bfg$ be a solvable $\Q$-group and let $S$ be a finite set of
  primes. Then there is a $\Q$-group $\bfa$ such that $\Comm(\bfg(S))
  = \bfa(\Q)$.
\end{theorem}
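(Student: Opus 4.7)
The plan is to filter any commensuration $\phi: \Gamma_1 \to \Gamma_2$ between finite-index subgroups of $\Gamma := \bfg(S)$ through the unipotent radical of $\bfg$, apply the unipotent rigidity of Theorem~\ref{unipcomm} to the restriction, and then show that the residual torus-level data assembles into the $\Q$-points of a single algebraic group.

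Let $\bfu := R_u(\bfg)$ and fix a maximal $\Q$-torus $\bft \leq \bfg$. After passing to a finite-index subgroup I may assume $\Gamma$ is virtually of the form $\Gamma_\bfu \cdot \Gamma_\bft$, with $\Gamma_\bfu \leq \bfu(S)$ $S$-arithmetic in $\bfu$ and $\Gamma_\bft$ projecting isomorphically onto an $S$-arithmetic subgroup of $\bfg/\bfu$. After first passing to a canonical subquotient that absorbs the trivial $\bft$-weight directions on the central series of $\bfu$, the Fitting subgroup $\Fitt(\Gamma')$ of any finite-index $\Gamma' \leq \Gamma$ becomes commensurable with $\Gamma' \cap \bfu(S)$, because all remaining $\bft$-weights on the graded pieces of $\bfu$ are nontrivial. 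Since $\Fitt$ is characteristic, every $\phi$ restricts to a commensuration of $\Gamma_\bfu$, and by Theorem~\ref{unipcomm} this yields a natural homomorphism
\[
\Psi_\bfu : \Comm(\Gamma) \longrightarrow \Aut(\bfu)(\Q).
\]
The commensuration $\phi$ also descends to an isomorphism between finite-index subgroups of $\Gamma/\Fitt(\Gamma)$, which is virtually a finitely generated abelian group of rank equal to that of $\bft(\Z_S)$; this produces a $\Q$-linear automorphism $\Psi_\bft(\phi)$ of $\Gamma_\bft \otimes \Q$.

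I would then define $\bfa$ as the $\Q$-subgroup of $\Aut(\bfu) \times \GL(\Gamma_\bft \otimes \Q)$ cut out by the polynomial conditions expressing that a pair $(\alpha,\beta)$ intertwines the $\bft$-conjugation action on $\bfu$, together with the constraint that $\beta$ preserves the $\Q$-rational structure on the character lattice of $\bft$. Injectivity of $\Comm(\Gamma) \to \bfa(\Q)$ follows from Mal'cev rigidity applied twice. For surjectivity, given a compatible pair $(\alpha,\beta)$, I would extend $\alpha$ to an automorphism of $\bfu(\Q)$ using the Mal'cev correspondence, then twist by a cocycle computation in $H^1(\Gamma_\bft; \bfu)$ to obtain a partially-defined automorphism of a finite-index subgroup of $\Gamma$ realizing the given pair.

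The main obstacle is verifying that the torus-quotient data is forced to be $\Q$-rational and then showing that the corresponding compatibility equations are genuinely algebraic. The subtlety is that an $S$-arithmetic subgroup of a $\Q$-torus has abstract commensurator roughly $\GL_n(\Q)$ of its free-abelian rank --- far larger than its commensurator inside $\bft(\Q)$ --- so the $\Q$-rationality of $\Psi_\bft(\phi)$ cannot be read off from the torus quotient alone. It must instead be deduced from the requirement that $\Psi_\bft(\phi)$ is compatible, via the action of $\bft$ on $\bfu$, with the manifestly $\Q$-rational automorphism $\Psi_\bfu(\phi) \in \Aut(\bfu)(\Q)$. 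This rigidity step uses the characteristic-zero hypothesis essentially, and is precisely what fails in the positive-characteristic examples of the paper's second half, including the lamplighter group.
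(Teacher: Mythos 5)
Your proposal has two genuine gaps, and both are precisely the points the paper has to work to overcome.

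\textbf{The claimed injectivity fails.} You assert that the map $\Comm(\Gamma) \to \Aut(\bfu)(\Q) \times \GL(\Gamma_\bft \otimes \Q)$ is injective ``by Mal'cev rigidity applied twice,'' but this is false. Consider a commensuration $\phi$ that restricts to the identity on a finite-index subgroup of $\bfu(S)$ and induces the identity on $\Gamma/(\Gamma \cap \bfu)$. Such a $\phi$ has the form $\phi(t) = v(t)\,t$ for some function $v$ valued in $\bfu(S)$; compatibility with conjugation forces $v(t)$ to lie in the $\torus$-centralizer inside $Z(\bfu)$, i.e.\ in $Z(\bfg)(\Q)$, and $v$ to be a crossed homomorphism. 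These give a nontrivial kernel, identified in Theorem~\ref{reducedcomm} as $\Hom(\Q^N, Z(\bfg)(\Q))$. So the target $\bfa \subseteq \Aut(\bfu) \times \GL(\Gamma_\bft\otimes\Q)$ you build cannot receive $\Comm(\Gamma)$ injectively; you must adjoin a normal subgroup of virtual derivations (or in the paper's language, $\Hom(\Q^N, Z(\bfg)(\Q))$) and prove the resulting extension is an algebraic semidirect product. The hard cohomological step in the paper's proof of Theorem~\ref{reducedcomm} (modifying $\Phi_0$ by a uniquely chosen inner element $x$ using that $\torus$ is reductive and a cohomological vanishing theorem) is what makes the semidirect product split, and your proposal contains no analogue of it.

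\textbf{The reduction to the reduced case does not work when $S \neq \emptyset$.} Your plan is to ``pass to a canonical subquotient that absorbs the trivial $\bft$-weight directions'' so that $\Fitt(\Gamma')$ becomes commensurable with $\Gamma'\cap\bfu(S)$. But the paper explicitly flags this as the obstruction in the $S$-arithmetic case: for $S = \emptyset$ one may replace the $\torus_0$-directions by unipotent ones since both give virtually free abelian arithmetic subgroups, but when $S\neq\emptyset$ an $S$-arithmetic subgroup of $\torus_0$ is free abelian while an $S$-arithmetic subgroup of a unipotent group is $p$-radicable for $p\in S$ — they are not abstractly commensurable, so no such subquotient exists. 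Moreover, if $\torus_0$ is the kernel of $\alpha|_{\torus}$, then $\Gamma_0 = \Gamma\cap\torus_0$ sits inside the center and hence inside $\Fitt(\Gamma)$, so your assertion that $\Fitt(\Gamma')$ is commensurable with $\Gamma'\cap\bfu(S)$ is wrong whenever $\torus_0$ is nontrivial. Related to this: the constraint that $\beta$ ``preserves the $\Q$-rational structure on the character lattice'' is far too restrictive on the $\torus_0$-direction, where commensurations act by the full $\GL_{N_0}(\Q)$ (and not by the finite group $\Aut_\Q(\torus_0)$). The paper instead keeps $\torus_0$ around, decomposes $\Gamma = (\bfu(S)\rtimes\Gamma_1)\times\Gamma_0$, and produces $\Comm(\Gamma)$ as an iterated semidirect product in which $\GL_{N_0}(\Q)$ and $\Hom(\Q^{N_1},\Q^{N_0})$ appear as separate algebraic factors; that bookkeeping is essential, not optional.
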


The group $\bfa$ is constructed explicitly as an iterated semidirect
product of groups. See Section \ref{mainproofsec} for proof and
details.

When $S = \emptyset$ the arithmetic group $\bfg(S) = \bfg(\Z)$ is
virtually polycyclic, and hence virtually a lattice in a connected,
simply-connected solvable Lie group. In \cite{studenmund} it is shown
that the abstract commensurator of a lattice in a connected,
simply-connected solvable Lie group is isomorphic to the $\Q$-points
of a $\Q$-group. Therefore the $S=\emptyset$ case of Theorem
\ref{mainthm} is a consequence of \cite{studenmund}.

When $S \neq \emptyset$ the group $\bfg(S)$ is no longer necessarily
polycyclic, so different methods are necessary. When $\bfu$ is
a unipotent group, for any set of primes $S$ we have
\[\Comm(\bfu(S)) \cong \Aut(\bfu)(\Q).
\]
In particular the abstract commensurator is independent of $S$. For
example, $\Comm(\Z[1/2]) \cong \Comm(\Z[1/3]) \cong \Q^*$. Note that
for each nontrivial unipotent group this provides an infinite family
of pairwise non-abstractly commensurable groups with isomorphic
abstract commensurator.

When $\bfg$ contains a torus, the abstract commensurator of an
$S$-arithmetic subgroup may depend on $S$. For example, let $\torus$
be the Zariski-closure of the cyclic subgroup generated by the matrix
$\left( \begin{smallmatrix} 2&1 \\ 1&1 \end{smallmatrix}\right)$. Note
that $\torus$ is diagonalizable over $\R$ and over $\Q_{11}$ since $5$
has an 11-adic square root, while $\torus$ is not diagonalizable over
either $\Q$ or $\Q_3$. It follows from Theorem \ref{storus} below that
\[
\torus(\emptyset) \doteq \Z, \qquad \torus(\{3\}) \doteq \Z, \quad
\quad \torus(\{11\}) \doteq \Z^2, \quad \text{ and } \quad
\torus(\{3,11\}) \doteq \Z^2,
\]
where we write $G\doteq H$ if $G$ and $H$ contain isomorphic subgroups
of finite index. Then $\Comm( \torus( \{ 11\} ) )$ and $\Comm( \torus(
\{ 3, 11 \} ) )$ are each isomorphic to $\GL_2(\Q)$, but neither is
isomorphic to $\Comm( \torus( \{3\} ) ) \cong \Q^*$. This dependence
on $S$ appears even for groups whose maximal torus acts faithfully on
the unipotent radical; see Theorem \ref{reducedcomm}.

\bold{Explicit description of commensurator.} A key case is when the
action of any maximal torus of $\bfg$ on the unipotent radical of
$\bfg$ is faithful. Such a solvable algebraic group is said to be {\em
  reduced}. When $\bfg$ is reduced, we have the following explicit
statement whether or not $S = \emptyset$:

\begin{theorem} \label{reducedcomm} Let $\bfg$ be a connected and
  reduced solvable $\Q$-group, let $S$ be a finite set of primes, and
  let $\Delta$ be an $S$-arithmetic subgroup of $\bfg$. Suppose
  $\bfg(S)$ is Zariski-dense in $\bfg$. There is a group isomorphism
  \[
  \Comm(\Delta) \cong \Hom(\Q^{N}, Z(\bfg)(\Q)) \rtimes
  \Aut_\Q(\bfg),
  \]
  where $N$ is the maximum rank of any torsion-free free abelian
  subgroup of $\torus(S)$ for any maximal $\Q$-defined torus $\torus
  \leq \bfg$, and the action is by postcomposition.
\end{theorem}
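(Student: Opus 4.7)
The plan is to build an explicit homomorphism
\[
\Phi : \Hom(\Q^N, Z(\bfg)(\Q)) \rtimes \Aut_\Q(\bfg) \to \Comm(\Delta)
\]
and show it is an isomorphism. Since $\Comm$ depends only on the commensurability class, I would replace $\Delta$ by $\bfg(S)$. Since $\bfg$ is reduced, fix a $\Q$-defined Levi decomposition $\bfg = \bfu \rtimes \torus$. The $\Aut_\Q(\bfg)$ factor maps to $\Comm(\Delta)$ via Proposition \ref{qautoms}. For $\phi \in \Hom(\Q^N, Z(\bfg)(\Q))$, choose a finite-index subgroup $\Delta' \leq \Delta$ so that the image of $\Delta'$ in $\torus$ is free abelian of rank $N$ and so that $\phi$ lands in $Z(\bfg)(\Q) \cap \Delta$ on this image; define the \emph{central twist} $T_\phi(\delta) = \delta \cdot \phi(\bar\delta)$ on $\Delta'$, where $\bar\delta$ denotes the projection to the torus quotient. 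Because $\phi$ is central-valued, a direct computation verifies $T_\phi$ is a homomorphism, and different choices of $\Delta'$ give equivalent classes in $\Comm(\Delta)$; the $\Q$-divisibility of the domain $\Q^N$ is exactly what encodes the freedom to rescale on finite-index subgroups. A short conjugation calculation then shows the semidirect product structure compatible with the stated action by postcomposition.

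For surjectivity I would argue as follows. Given $f : \Delta_1 \to \Delta_2$ representing a class in $\Comm(\Delta)$, the unipotent radical is a characteristic subgroup, so $f$ restricts to an isomorphism of $\Delta_i \cap \bfu$, and Theorem \ref{unipcomm} (Mal'cev rigidity) produces a unique $\alpha_\bfu \in \Aut_\Q(\bfu)$ inducing this restriction. Because $\bfg$ is reduced, conjugation embeds $\torus \hookrightarrow \Aut_\Q(\bfu)$ as a $\Q$-subtorus; because $\bfg(S)$ is Zariski-dense in $\bfg$, the image of $\Delta_1$ in $\torus$ is Zariski-dense. Applying $f$ to the conjugation identity $\delta u \delta^{-1} \in \Delta_1 \cap \bfu$ shows that $\alpha_\bfu$ conjugates $\bfad(\bar\delta)$ to $\bfad(\overline{f(\delta)})$ inside $\Aut_\Q(\bfu)$, and Zariski-density then forces $\alpha_\bfu$ to normalize the image of $\torus$. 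Consequently $\alpha_\bfu$ lifts to an $\alpha \in \Aut_\Q(\bfg)$. After composing with $\Theta(\alpha)^{-1}$, we may assume $f$ is the identity on $\Delta_1 \cap \bfu$. Writing $f(\delta) = \delta \cdot c(\delta)$, the homomorphism relation combined with $f|_{\bfu} = \Id$ forces $c(\delta)$ to centralize $\Delta_1 \cap \bfu$, hence by Zariski-density all of $\bfu$; the reduced hypothesis gives $C_\bfg(\bfu) = Z(\bfu)$, and comparing the cocycle for $\delta$ and for a torus-conjugate $\delta$ shows $c(\delta)$ is $\torus$-fixed, placing it in $Z(\bfg)$. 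The twisted cocycle identity for $c$ then collapses to a genuine homomorphism $\Delta_1/(\Delta_1 \cap \bfu) \to Z(\bfg)(\Q)$, which extends $\Q$-linearly to the required $\phi \in \Hom(\Q^N, Z(\bfg)(\Q))$.

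Injectivity is then routine: if $\Phi(\phi,\alpha)$ agrees with the identity on a finite-index subgroup of $\Delta$, its restriction to $\Delta \cap \bfu$ together with Mal'cev rigidity and the reduced hypothesis forces $\alpha = \Id$, and the residual $T_\phi$ being trivial on a finite-index subgroup yields $\phi = 0$ by divisibility of $\Q^N$. The main obstacle I anticipate is the lifting step in the surjectivity argument, namely showing that the unipotent automorphism $\alpha_\bfu$ supplied by Mal'cev rigidity is the restriction of a $\Q$-automorphism of all of $\bfg$; this is where the reduced hypothesis (to identify $\torus \hookrightarrow \Aut_\Q(\bfu)$) and the Zariski-density hypothesis on $\bfg(S)$ (to ensure the image of $\Delta_1$ in $\torus$ is Zariski-dense) are both indispensable. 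A secondary technical point is upgrading $c(\delta)$ from an element of $C_\bfg(\bfu)$ to an element of $Z(\bfg)$, which is what turns the twisted cocycle into a genuine homomorphism and ultimately supplies the $\Hom(\Q^N, Z(\bfg)(\Q))$ factor rather than a larger cohomological object.
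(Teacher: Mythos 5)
Your construction of the explicit map, your injectivity argument, and your lifting of $\alpha_\bfu$ to some $\alpha \in \Aut_\Q(\bfg)$ all track the paper's approach closely (the lift is essentially the paper's $\Phi_0$). The genuine gap is in the surjectivity argument, precisely at the step you yourself flag as a ``secondary technical point'': the claim that $c(\delta)$ lies in $Z(\bfg)$.

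After composing with $\Theta(\alpha)^{-1}$, the function $c(\delta) = \delta^{-1} f(\delta)$ satisfies the twisted cocycle identity $c(\delta_1\delta_2) = \delta_2^{-1} c(\delta_1)\,\delta_2\, c(\delta_2)$ and, as you correctly deduce, takes values in $Z(\bfu)(\Q) = C_\bfg(\bfu)(\Q)$; but nothing forces these values to be $\torus$-fixed. Your phrase ``comparing the cocycle for $\delta$ and for a torus-conjugate $\delta$'' does not yield this: $\torus$ is abelian, so conjugating a torus element of $\Delta_1$ by $\torus$ is vacuous, and for elements with a $\bfu$-component the cocycle identity only restates the derivation condition. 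If $Z(\bfu)$ has a nontrivial $\torus$-weight subspace $V$ with $C_V(\torus)$ trivial, then $c$ can perfectly well have a nonzero $V$-component, in which case $c(\delta)\notin Z(\bfg)$ and the cocycle does not collapse to a homomorphism. What the paper actually does here is exploit the non-uniqueness of the lift $\alpha$: it can be composed with an inner automorphism by any $x\in Z(\bfu)(\Q)$, which changes $c$ by the inner derivation $\delta\mapsto \delta^{-1}x\delta\,x^{-1}$. Writing $Z(\bfu) = (Z(\bfg)\cap\bfu)\oplus V$ as a $\torus$-module with $C_V(\torus)$ trivial, a cohomological vanishing theorem (\cite[Ch3, Thm2**, pg44]{segalbook}) shows that the $V$-component of $c$ is such an inner derivation. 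Only after normalizing $\alpha$ by the corresponding $x$ does $c$ land in $Z(\bfg)(\Q)$ and descend to an honest homomorphism $\Delta_1/(\Delta_1\cap\bfu)\to Z(\bfg)(\Q)$. Without this step your assignment of $(\phi,\alpha)$ to a given commensuration is not well defined, so surjectivity does not go through. (This is exactly the ``missing step'' referenced in the paper's acknowledgments.)
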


\begin{remark}
  In the case $S = \emptyset$, Theorem \ref{mainthm} follows from
  Theorem \ref{reducedcomm} by the fact that any solvable arithmetic
  group $\Gamma$ is abstractly commensurable with an arithmetic
  subgroup of a {\em reduced} solvable group. See
  \cite[Thm3.4]{grunewaldplatonov} for a proof of this fact. This is
  possible because arithmetic subgroups of tori are abstractly
  commensurable with arithmetic subgroups of abelian unipotent groups;
  both are virtually free abelian. The same method does not work when
  $S$ is nonempty: $S$-arithmetic subgroups of tori are virtually free
  abelian while $S$-arithmetic subgroups of unipotent groups are not.
\end{remark}

\begin{remark}
  Bogopolski \cite{bogopolski} has computed abstract commensurators of
  the solvable Baumslag-Solitar groups to be
  \[\Comm(BS(1,n)) \cong \Q \rtimes \Q^*.\]
  Theorem \ref{reducedcomm} recovers Bogopolski's result in the case
  that $n$ is a prime power, since $BS(1,p^2)$ is isomorphic to the
  group $\bfg(S)$ where $S = \{ p \}$ and $\bfg = \bfb_2 / Z(\bfb_2)$
  for
  \[
  \bfb_2 = \left\{ \begin{pmatrix} x & z \\  0 & y \end{pmatrix}
    \suchthat xy=1 \right\} \subseteq \GL_2(\C).
  \]
  Note that $BS(1,n^k)$ is a finite-index subgroup of $BS(1,n)$,
  hence the two groups have isomorphic abstract commensurators.

  When $n$ is not a prime power, $BS(1,n)$ is no longer commensurable
  with an $S$-arithmetic group. However, $BS(1,n^2)$ embeds as a
  Zariski-dense subgroup of $(\bfb_2 / Z(\bfb_2) )(S)$ where $S$
  consists of the prime factors of $n$. It may be possible to modify
  the proof of Theorem \ref{reducedcomm} to compute $\Comm( BS(1,n) )$
  for any $n$ from this embedding.
\end{remark}

\bold{Positive characteristic and the lamplighter group.} Above we
have only defined $S$-arithmetic subgroups of $\Q$-groups, but
$S$-arithmetic groups may be defined over any global field. However,
Theorem \ref{mainthm} has no obvious analog for $S$-arithmetic groups
over fields of positive characteristic. Section \ref{poscharsection}
includes examples demonstrating this failure.

A well-known example of a solvable $S$-arithmetic group in
characteristic 2 is the lamplighter group $(\Z/2\Z) \wr \Z$. Section
\ref{appendix} describes the abstract commensurator of the lamplighter
group, with the following main result.
\begin{theorem} 
  \label{lampthm}
  Using the definitions of Equations \ref{vderdefn} and
  \ref{dsinftydefn} of Section \ref{appendix}, there is an isomorphism
  \begin{equation*} \Comm( (\Z/2\Z) \wr \Z ) \cong (
    \VDer(\Z, \K) \rtimes \Comm_\infty(\K) ) \rtimes (\Z/2\Z).
\end{equation*}
\end{theorem}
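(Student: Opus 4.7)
The plan is to realize $L := (\Z/2\Z)\wr\Z$ as a solvable $S$-arithmetic group in positive characteristic and then compute its commensurator directly. First, identify $L \cong \F_2[t,t^{-1}]\rtimes\langle t\rangle$, where the base $B := \bigoplus_\Z\F_2$ is viewed as the additive group of Laurent polynomials over $\F_2$ and $\langle t\rangle\cong\Z$ acts by multiplication. Equivalently, $L \cong \bfg(\F_2[t,t^{-1}])$ for $\bfg = \mathbf{G}_a \rtimes \mathbf{G}_m$ over $\F_2$. The field $\K$ of Section \ref{appendix} plays the role of the ambient field (a localization or completion of $\F_2(t)$) in which the partial isomorphisms will be linearized.

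Next, classify finite-index subgroups of $L$. Because $B$ is the unique maximal $2$-torsion subgroup, it is characteristic, so every finite-index $\Gamma \leq L$ has the form $B' \rtimes \langle t^n\rangle$, where $n\geq 1$ and $B'\leq B$ is of finite index and stable under multiplication by $t^n$. Such $B'$ corresponds, after scaling, to a cocompact $\F_2[t^n,t^{-n}]$-submodule of $\K$. Given an isomorphism $\phi\colon\Gamma_1\to\Gamma_2$, the fact that $\phi$ maps $B_1'$ onto $B_2'$ shows that $\phi|_{B_1'}$ is an isomorphism of lattices in $\K$; using cocompactness together with how $\phi$ intertwines the semisimple action of $t^n$, one extends $\phi|_{B_1'}$ to a $\K$-semilinear map of $\K$ itself. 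The induced action on the quotient records a scalar $u\in\K^\times$ plus possibly the Galois-type automorphism $t\mapsto t^{-1}$, while the freedom to replace $\phi(t^n)$ by $\phi(t^n)\cdot b$ for $b\in B$ contributes an additive cocycle $\Z\to\K$.

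Finally, read off the three pieces of the target. The multiplicative scalar, modulo the inner contribution from $\langle t\rangle$, gives a class in $\Comm_\infty(\K)$. The cocycle, modulo coboundaries from inner automorphisms by elements of $B$, yields a class in $\VDer(\Z,\K)$. The involution $t\mapsto t^{-1}$ is an honest outer automorphism of $L$ and generates the $\Z/2\Z$-factor. Assemble these into the claimed semidirect product, verify the action structure via conjugation inside the ambient group $\K \rtimes (\K^\times \rtimes \langle t\mapsto t^{-1}\rangle)$, check injectivity by showing that trivial data corresponds to an inner automorphism of some finite-index subgroup of $L$, and check surjectivity by exhibiting each class as an explicit partial isomorphism. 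The main obstacle is the extension step: without Mal'cev or Margulis rigidity in positive characteristic, extending a partial isomorphism of lattices $B_i'\subseteq\K$ to a $\K$-semilinear map on all of $\K$ requires careful use of cocompactness and of the $\langle t^n\rangle$-module structure; it is precisely this extra flexibility --- absent in the characteristic zero setting of Theorem \ref{reducedcomm} --- that introduces the derivation factor $\VDer(\Z,\K)$.
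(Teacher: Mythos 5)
Your proposal misidentifies the object $\K$ of Section \ref{appendix}: there $\K$ is the abelian group $\bigoplus_\Z \Z/2\Z$ (the base of the lamplighter), not a field, and it is distinct from the global field $K=\F_q(t)$ of Section \ref{poscharsection}. More seriously, the step where you extend $\phi|_{B_1'}$ to a ``$\K$-semilinear map'' recorded by a ``scalar $u\in\K^\times$'' is not valid, and if it were valid it would contradict the theorem. A $t^n$-equivariant commensuration of the base is an $\F_2[t^n,t^{-n}]$-module isomorphism between finite-index submodules of $\F_2[t,t^{-1}]$, and since $\F_2[t,t^{-1}]$ is free of rank $n$ over $\F_2[t^n,t^{-n}]$, these correspond after localization to elements of $\GL_n(\F_2(t^n))$, not to scalars. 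For example, the $t^2$-equivariant automorphism swapping $e_{2k}$ and $e_{2k+1}$ for all $k$ is not multiplication by any element of $\F_2(t)$. In fact $\Comm_\infty(\K)$ contains $\varinjlim_m \GL_m(\F_2)$ and hence every finite group, which is precisely why the paper's Example \ref{poscharex2} concludes $\Comm(\Gamma)$ is not linear over any field; your proposed ambient group $\K \rtimes (\K^\times \rtimes \langle t\mapsto t^{-1}\rangle)$ is linear, so it cannot contain $\Comm(\Gamma)$, let alone equal it.

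You also misattribute the derivation factor $\VDer(\Z,\K)$ to ``extra flexibility absent in characteristic zero.'' In the paper's proof it is the kernel of the restriction map $\ker(\Theta)\to\Comm(\K)$ (Lemma \ref{appkerphi}), and it has a direct analogue in the characteristic-zero Theorem \ref{reducedcomm}, namely the factor $\Hom(\Q^{N}, Z(\bfg)(\Q))$; it is the middle factor $\Comm_\infty(\K)$ that is the genuinely positive-characteristic phenomenon. Finally, your sketch omits the content of Lemma \ref{appsurj}: one must prove the natural map $\Comm(\Gamma)\to\Comm(\Gamma/\K)\cong\Comm(\Z)\cong\Q^*$ has image exactly $\Aut(\Z)\cong\Z/2\Z$, i.e.\ that a commensuration cannot send $t^{m_1}$ to something projecting to $t^{m_2}$ with $m_1\neq m_2$. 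This is not automatic; the paper proves it by computing $\K_i/[\K_i,G_i]\cong(\Z/2\Z)^{m_i}$ and observing this is a commensurability invariant. Without this step the $\Z/2\Z$ factor in your assembly is unjustified.
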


Using this decomposition we show, for example, that the abstract
commensurator of the lamplighter group contains every finite group as
a subgroup.

\bold{Acknowledgments:} I am grateful to Dave Morris, Bena Tshishiku,
Kevin Wortman, and Alex Wright for helpful discussions. Thanks to
Benson Farb for encouraging me to complete this project, as well as
providing helpful feedback on earlier versions of this paper. I am
extremely grateful to Dave Morris for his help, including detailed
comments on a draft of this paper and pointing out a missing step in
the proof of the main theorem.

\section{Background and definitions} \label{backgroundsec}

For any group $\Gamma$, a {\em partial automorphism} of $\Gamma$ is an
isomorphism between finite-index subgroups of $\Gamma$. Two partial
automorphisms $\phi_1$ and $\phi_2$ are {\em equivalent} if there is
some finite indes $\Delta \leq \Gamma$ so that $\restr{\phi_1}{\Delta}
= \restr{\phi_2}{\Delta}$; an equivalence class of partial
automorphisms is a {\em commensuration} of $\Gamma$. The {\em abstract
  commensurator} $\Comm(\Gamma)$ is the group of commensurations of
$\Gamma$. If $\Gamma_1$ and $\Gamma_2$ are abstractly commensurable
groups then $\Comm(\Gamma_1) \cong \Comm(\Gamma_2)$. We will
implicitly use this fact often in the following discussion.

A subgroup $\Delta \leq \Gamma$ is {\em commensuristic} if
$\phi(\Delta\cap \Gamma_1)$ is commensurable with $\Delta$ for every
partial automorphism $\phi: \Gamma_1 \to \Gamma_2$ of $\Gamma$. Say
that $\Delta$ is {\em strongly commensuristic} if $\phi( \Delta \cap
\Gamma_1) = \Delta \cap \Gamma_2$ for every such $\phi$. If $\Delta$
is commensuristic, restriction induces a map $\Comm(\Gamma) \to
\Comm(\Delta)$. If $\Delta$ is strongly commensuristic, then there is
a natural map $\Comm(\Gamma) \to \Comm(\Gamma / \Delta)$.

A group $\Gamma$ {\em virtually} has a property $P$ if there is a
subgroup $\Delta \leq \Gamma$ of finite index with property $P$. For
any $\Lambda$, a {\em virtual homomorphism} $\Gamma \to \Lambda$ is a
homomorphism from a finite-index subgroup of $\Gamma$ to
$\Lambda$. Two such virtual homomorphisms are {\em equivalent} if they
agree on a finite-index subgroup of $\Gamma$.

By a {\em $\Q$-defined linear algebraic group}, or {\em $\Q$-group},
we mean a subgroup $\bfg \leq \GL_n(\C)$ for some $n$ that is closed
in the Zariski topology and whose defining polynomials may be chosen
to have coefficients in $\Q$. The {\em $\Q$-points} of $\bfg$ are
$\bfg(\Q) = \bfg \cap \GL_n(\Q)$. If $S$ is a finite set of prime
numbers, we define the group of {\em $S$-integers points} of $\bfg$,
denoted $\bfg(S)$, to be the subgroup of elements of $\bfg(\Q)$ with
matrix coefficients having denominators divisible only by elements of
$S$. A subgroup of $\bfg(\Q)$ is {\em $S$-arithmetic} if it is
commensurable with $\bfg(S)$. An abstract group $\Gamma$ is {\em
  $S$-arithmetic} if it is abstractly commensurable with an
$S$-arithmetic subgroup of some $\Q$-group $\bfg$.

Now let $\bfg$ be a solvable $\Q$-group, $S$ a finite set of primes,
and $\Gamma = \bfg(S)$. Since $[\bfg : \bfg^0]<\infty$, we will assume
$\bfg$ is connected. The subgroup $\bfu \leq \bfg$ consisting of all
unipotent elements of $\bfg$ is connected, defined over $\Q$, and is
called the {\em unipotent radical}. For any maximal $\Q$-defined torus
$\torus \leq \bfg$, there is a semidirect product decomposition $\bfg
= \bfu \rtimes \torus$.

For any $\Q$-defined torus $\torus$ and any field extension $F$ of
$\Q$, the {\em $F$-rank} of $\torus$, denoted $\rank_F(\torus)$, is
the dimension of any maximal subtorus of $\torus$ diagonalizable over
$F$. We will use the following special case of \cite[Thm5.12,
pg276]{platrap}.
\begin{theorem} \label{storus}
  Let $\torus$ be a torus defined over $\Q$ and $S$ a finite set of
  prime numbers. Then $\torus(S)$ is isomorphic to the product of a
  finite group and a free abelian group of rank 
  \[N = \rank_\R(\torus) - \rank_\Q(\torus) + \sum_{p\in S}
  \rank_{\Q_p}(\torus). \]
\end{theorem}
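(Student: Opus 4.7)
The plan is to compute the torsion-free rank of $\torus(S)$ via a logarithmic embedding into Euclidean space, then use the product formula for an upper bound and reduce to the classical $S$-unit theorem for the matching lower bound. First, $\torus(S)$ is finitely generated abelian: any faithful $\Q$-embedding $\torus \hookrightarrow \GL_n$ places $\torus(S) \subseteq \GL_n(\Z[1/S])$, which is finitely generated, and abelianness is automatic. So $\torus(S) \cong F \times \Z^N$ for some finite $F$, and the task reduces to identifying $N$. For each place $v \in S \cup \{\infty\}$ (with $\Q_\infty = \R$), let $M_v$ be the group of characters of $\torus$ defined over $\Q_v$, so that $\dim_\R(M_v \otimes \R) = \rank_{\Q_v}(\torus)$, and define
$$\lambda \colon \torus(S) \to V := \bigoplus_{v \in S \cup \{\infty\}} \Hom(M_v, \R), \qquad \lambda(t)(\chi) = \log|\chi(t)|_v.$$
For $v \notin S \cup \{\infty\}$ the $S$-integer condition places $t$ in the maximal compact subgroup of $\torus(\Q_v)$, which makes $\mathrm{im}(\lambda)$ a discrete subgroup of $V$ and forces $\ker(\lambda) = F$.

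Next I would cut out the subspace containing $\mathrm{im}(\lambda)$. For each $\Q$-rational character $\chi$ of $\torus$, the product formula together with the triviality of $|\chi(t)|_v$ at $v \notin S \cup \{\infty\}$ yields the linear relation $\sum_{v \in S \cup \{\infty\}} \log|\chi(t)|_v = 0$. Letting $\chi$ range over a $\Z$-basis of the lattice of $\Q$-rational characters of $\torus$, whose rank is $\rank_\Q(\torus)$, produces that many independent relations. Hence $\mathrm{im}(\lambda)$ lies in a subspace $V_0 \subseteq V$ with
$$\dim_\R V_0 = \rank_\R(\torus) + \sum_{p \in S} \rank_{\Q_p}(\torus) - \rank_\Q(\torus) = N,$$
establishing the upper bound $\rank_\Z \torus(S) \le N$.

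The main obstacle is the matching lower bound, namely showing that $\mathrm{im}(\lambda)$ is a cocompact lattice in $V_0$. I would handle this via the structure theory of $\Q$-tori: $\torus$ admits a $\Q$-isogeny to a product of Weil restrictions $R_{L/\Q}\mathbb{G}_m$ and their norm-one subtori $R^{(1)}_{L/\Q}\mathbb{G}_m$, and since such isogenies have finite kernel they preserve the rank of $S$-integer points. For the building block $R_{L/\Q}\mathbb{G}_m$ the $S$-integer points are precisely the $S_L$-units of $L$, where $S_L$ denotes the primes of $L$ above $S$ together with the archimedean places; these have rank $|S_L|-1$ by the classical Dirichlet--Chevalley $S$-unit theorem. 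The norm-one subtorus contributes the kernel of the norm map to $\Z[1/S]^*$. Direct verification matches the expected value of $N$ on each factor, and reassembly along the character-lattice decomposition of $\torus$ completes the proof.
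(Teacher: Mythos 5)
Note first that the paper does not actually prove Theorem \ref{storus}: it is quoted as a special case of Platonov--Rapinchuk, so your proposal is supplying an argument the paper does not contain. Your first two paragraphs --- the logarithmic map into $V = \bigoplus_{v\in S\cup\{\infty\}}\Hom(M_v,\R)$, finiteness of the kernel and discreteness of the image from compactness at the places outside $S\cup\{\infty\}$, and the cut-down to $V_0$ by the $\rank_\Q(\torus)$ independent product-formula relations attached to $\Q$-rational characters --- are sound, and indeed give the standard route to the upper bound $\rank_\Z\torus(S)\le N$. One small quibble: the opening assertion that $\torus(S)$ is finitely generated ``because $\GL_n(\Z[1/S])$ is finitely generated'' does not stand on its own (subgroups of finitely generated groups need not be finitely generated); finite generation is better deduced after the fact from the extension $1\to\ker\lambda\to\torus(S)\to\mathrm{im}\,\lambda\to 1$ that you set up anyway.

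The genuine gap is in the lower bound. The structural claim that every $\Q$-torus is $\Q$-isogenous to a product of Weil restrictions $R_{L/\Q}\mathbb{G}_m$ and norm-one subtori $R^{(1)}_{L/\Q}\mathbb{G}_m$ is false. Such a product has rational character module a nonnegative integer combination of permutation modules $\Q[G/H]$ and their augmentation ideals $I_{G/H}$, and not every rational Galois representation has this form. Concretely, take $L/\Q$ cyclic of degree $p^2$ with $p$ an odd prime, put $G=\mathrm{Gal}(L/\Q)\cong\Z/p^2$, and let $\torus$ be the $\Q$-torus split by $L$ with $X^*(\torus)\otimes\Q$ the $(p^2-p)$-dimensional irreducible $\Q(\zeta_{p^2})$ of $\Q[G]$. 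The only permutation modules or augmentation ideals for $G$ containing this constituent are $\Q[G]$ and $I_G$, and each of those also carries $\Q(\zeta_p)$, which cannot be cancelled with nonnegative coefficients; so no such decomposition exists, and your reduction does not reach this torus. To repair the argument one should either weaken the structural input to what Artin's induction theorem actually supplies --- namely that there exist induced tori $\bft_1,\bft_2$ and an integer $m\ge 1$ with $\torus^{m}\times\bft_1$ $\Q$-isogenous to $\bft_2$ --- and then exploit the additivity of both sides of the rank formula under products together with its invariance under $\Q$-isogeny, or else bypass the reduction entirely and deduce cocompactness of $\mathrm{im}\,\lambda$ in $V_0$ from the adelic compactness theorem for tori, which is the route taken in the cited reference. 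As written, the matching lower bound is not established.
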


If $\bfu$ is a connected unipotent $\Q$-group, then $\Aut(\bfu)$ may
be identified with the automorphism group of the Lie algebra of $\bfu$
and thus has the structure of a $\Q$-group. This structure is such
that $\Aut(\bfu)(\Q) = \Aut_\Q(\bfu)$, where $\Aut_\Q(\bfu)$ is the
group of $\Q$-defined automorphisms of $\bfu$. A solvable $\Q$-group
$\bfg$ is said to be {\em reduced}, or to have {\em strong unipotent
  radical}, if the action of any maximal $\Q$-defined torus on the
unipotent radical is faithful. If $\bfg$ is reduced then $\Aut(\bfg)$
naturally has the structure of a $\Q$-group such that $\Aut(\bfg)(\Q)
= \Aut_\Q(\bfg)$; see \cite[Section 4]{grunewaldplatonov} or
\cite[Section 3]{bauesgrunewald}.

\section{Proof of main theorems} \label{mainproofsec}

In this section we begin the work necessary to prove Theorem
\ref{mainthm}, by way of Theorem \ref{reducedcomm}. Let $\bfg$ be a
connected solvable $\Q$-group, let $S$ be a finite set of prime
numbers, and let $\Gamma \leq \bfg(\Q)$ be an $S$-arithmetic subgroup.
Replacing $\bfg$ by the Zariski-closure of $\Gamma$, we will assume
going forward that $\Gamma$ is Zariski-dense in $\bfg$.

Write $\bfg = \bfu \rtimes \torus$ as above. We will assume without
loss of generality that $\Gamma$ decomposes as $\Gamma = \bfu(S)
\rtimes \Gamma_\torus$ for some finitely generated, torsion-free, free
abelian $S$-arithmetic subgroup $\Gamma_\torus \leq \torus(S)$; see
\cite[Lem5.9]{platrap} and Theorem \ref{storus}.

A group $\Gamma$ is {\em uniquely $p$-radicable} if for
every $\gamma \in \Gamma$ there is a unique element $\delta\in \Gamma$
such that $\delta^p = \gamma$.
\begin{lemma}
  Suppose $\Delta$ is any finite-index subgroup of $\Gamma$ and $p\in
  S$. Then $\Delta \cap \bfu(S)$ is the unique maximal uniquely
  $p$-radicable subgroup of $\Delta$.
\end{lemma}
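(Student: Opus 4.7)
The plan is to establish two complementary facts about $\Delta_0 := \Delta \cap \bfu(S)$: first, that $\Delta_0$ is itself uniquely $p$-radicable; second, that every uniquely $p$-radicable subgroup $H \leq \Delta$ is contained in $\bfu(S)$. Together these force $\Delta_0$ to be the unique maximal such subgroup, since the first gives that $\Delta_0$ is one such subgroup and the second shows every other is contained in it.

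For the first fact, I would start from the observation that $\bfu(S)$ is uniquely $p$-radicable: the logarithm identifies $\bfu(S)$ with a $\Z[1/S]$-submodule of the Lie algebra $\uu(\Q)$, and since $p \in S$ we have $1/p \in \Z[1/S]$, so the image is uniquely $p$-divisible. To promote this to the finite-index subgroup $\Delta_0$, let $N$ be the normal core of $\Delta_0$ in $\bfu(S)$, so that $N \lhd \bfu(S)$, $[\bfu(S):N]<\infty$, and $N \leq \Delta_0$. The key technical claim is that $|\bfu(S)/N|$ is coprime to $p$: the quotient is a finite nilpotent group which inherits $p$-divisibility from $\bfu(S)$, and no nontrivial finite $p$-group can be $p$-divisible (since repeated $p$-th roots would produce elements of unbounded $p$-power order), so in the Sylow decomposition the $p$-part is trivial. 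Once coprimality is in hand, the $p$-th power map is a bijection on any finite group of order coprime to $p$; in particular on $\Delta_0/N$, so for any $\gamma \in \Delta_0$ the element $\bar\gamma$ admits a $p$-th root inside $\Delta_0/N$, and by uniqueness of $p$-th roots in the ambient $\bfu(S)/N$ this root coincides with the image $\bar\delta$ of the unique $p$-th root $\delta \in \bfu(S)$ of $\gamma$. Hence $\delta \in \Delta_0 N = \Delta_0$. Uniqueness of $\delta$ within $\Delta_0$ is automatic from uniqueness in $\bfu(S)$.

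For the second fact, I would use the semidirect decomposition $\Gamma = \bfu(S) \rtimes \Gamma_\torus$ recorded before the lemma, where $\Gamma_\torus$ is finitely generated, torsion-free, and free abelian by Theorem \ref{storus}. Let $\pi : \Gamma \to \Gamma_\torus$ be the projection, and let $H \leq \Delta$ be uniquely $p$-radicable. Every $t = \pi(h) \in \pi(H)$ satisfies $t = \pi(\delta)^p$ for some $\delta \in H$, so $\pi(H)$ is a $p$-divisible subgroup of the finitely generated free abelian group $\Gamma_\torus$; such a subgroup must be trivial, whence $H \leq \ker \pi \cap \Delta = \Delta_0$. The main obstacle in the whole argument is the coprimality statement $\gcd(p,[\bfu(S):N]) = 1$; after that, the rest is a routine finite-group computation and a soft divisibility argument in $\Z^N$.
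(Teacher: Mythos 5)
Your overall architecture is sound and matches the paper's in outline: show $\Delta_0 = \Delta \cap \bfu(S)$ is uniquely $p$-radicable, then observe that any uniquely $p$-radicable subgroup of $\Delta$ projects to a $p$-divisible (hence trivial) subgroup of $\Gamma_\torus \cong \Z^{N}$, so lies inside $\Delta_0$. Your normal-core/coprimality argument for descending unique $p$-radicability to the finite-index subgroup $\Delta_0$ is correct and is in fact more self-contained than the paper's, which simply asserts that the property passes to finite-index subgroups. The second half (projection to $\Gamma_\torus$) is also fine and spells out what the paper leaves implicit.

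The gap is in the very first step, the claim that ``the logarithm identifies $\bfu(S)$ with a $\Z[1/S]$-submodule of $\uu(\Q)$.'' This is false in general. Take $\bfu = \bfu_3$ the Heisenberg group and $S = \{3\}$. Then
\[
\log \begin{pmatrix} 1 & a & c \\ 0 & 1 & b \\ 0 & 0 & 1 \end{pmatrix}
= \begin{pmatrix} 0 & a & c - \tfrac{ab}{2} \\ 0 & 0 & b \\ 0 & 0 & 0 \end{pmatrix},
\]
and the image $\log(\bfu_3(\Z[1/3]))$ is not closed under addition: adding $\log(e_{12})$ and $\log(e_{23})$ (where $e_{ij}$ is the elementary unipotent) produces a matrix whose exponential has a $\tfrac{1}{2}$ in the corner, and $\tfrac{1}{2}\notin \Z[1/3]$. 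So the log image need not be a $\Z[1/S]$-submodule, and the invocation of ``uniquely $p$-divisible $\Z[1/S]$-module'' does not get off the ground. (One could try to weaken the claim to ``$\log(\bfu(S))$ is stable under multiplication by $1/p$,'' but since $\exp$ intertwines $p$-th roots with scalar multiplication by $1/p$, that weaker claim is exactly the statement that $\bfu(S)$ is uniquely $p$-radicable, so the argument becomes circular.) The paper avoids this by reducing to the full upper unitriangular group $\bfu_n(S)$ and running an induction along its central series, where each graded piece really is a $\Z[1/S]$-module; alternatively one can argue $q$-adically that $\bfu_n(\Z_q)$ is uniquely $p$-radicable for each prime $q\neq p$ and intersect. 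Replacing your log argument with either of these repairs the proof; the rest of your write-up stands.
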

\begin{proof}
  Since $\Gamma_\torus$ is isomorphic to $\Z^N$ for some $N$, it
  suffices to show that $\bfu(S) \cap \Delta$ is uniquely
  $p$-radicable. Moreover, because the property of being uniquely
  $p$-radicable is inherited by subgroups of finite index, it suffices
  to check that $\bfu(S)$ is uniquely $p$-radicable. It is a standard
  fact that $\bfu$ is $\Q$-isomorphic to a subgroup of the group of
  $n\times n$ matrices with $1$'s on the diagonal, which we denote
  $\bfu_n$. Therefore $\bfu(S)$ is commensurable with a subgroup of
  $\bfu_n(S)$. The desired property is preserved by commensurability
  of torsion-free groups, so it suffices to show that $\bfu_n(S)$ is
  uniquely $p$-radicable. This may easily be done by induction on $n$.
\end{proof}
\begin{corollary} \label{uiscomm} If $S\neq \emptyset$, then $\bfu(S)$
  is strongly commensuristic in $\Gamma$.
\end{corollary}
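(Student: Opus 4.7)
The plan is to deduce this as an essentially immediate consequence of the preceding lemma, using the hypothesis $S \neq \emptyset$ to pick some prime $p \in S$. Strong commensuristicity requires that for every partial automorphism $\phi : \Gamma_1 \to \Gamma_2$ of $\Gamma$ (so $\Gamma_1, \Gamma_2$ are finite-index subgroups of $\Gamma$), one has $\phi(\bfu(S) \cap \Gamma_1) = \bfu(S) \cap \Gamma_2$. The key observation is that unique $p$-radicability is a purely group-theoretic property, so it is preserved by any abstract isomorphism.

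Concretely, I would fix any prime $p \in S$ and let $\phi: \Gamma_1 \to \Gamma_2$ be an arbitrary partial automorphism. Applying the previous lemma twice, $\bfu(S) \cap \Gamma_1$ is the unique maximal uniquely $p$-radicable subgroup of $\Gamma_1$, and similarly $\bfu(S) \cap \Gamma_2$ is the unique maximal uniquely $p$-radicable subgroup of $\Gamma_2$. Since $\phi$ is an isomorphism, the image $\phi(\bfu(S) \cap \Gamma_1)$ is a uniquely $p$-radicable subgroup of $\Gamma_2$ that is maximal with this property (any strictly larger uniquely $p$-radicable subgroup of $\Gamma_2$ would pull back under $\phi^{-1}$ to a strictly larger such subgroup of $\Gamma_1$). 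By uniqueness, $\phi(\bfu(S) \cap \Gamma_1) = \bfu(S) \cap \Gamma_2$, as required.

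There is essentially no obstacle here: the only content beyond the lemma is the remark that uniqueness of a maximal subgroup satisfying a group-theoretic property automatically forces that subgroup to be strongly commensuristic, and this is a formal observation. The role of the hypothesis $S \neq \emptyset$ is simply to guarantee the existence of a prime $p$ for which the lemma applies; if $S$ were empty there would be no characteristic prime available, which is consistent with the fact that in the arithmetic case $\bfu(\Z)$ need not be strongly commensuristic in $\Gamma$.
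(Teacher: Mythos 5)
Your proof is correct and is precisely the intended argument: the paper gives no explicit proof of the corollary because it follows immediately from the preceding lemma in exactly the way you describe, with the isomorphism-invariant characterization of $\bfu(S) \cap \Gamma_i$ as the unique maximal uniquely $p$-radicable subgroup forcing equality under any partial automorphism.
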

\begin{remark}
  If $S=\emptyset$ then Corollary \ref{uiscomm} is still true when
  $\bfg$ is reduced. This follows from the fact that $\Gamma \cap
  \bfu$ is the Fitting subgroup of $\Gamma$ for any arithmetic
  subgroup $\Gamma \leq \bfg(\Q)$; see \cite[2.6]{grunewaldplatonov}
  for proof.
\end{remark}
\begin{theorem} \label{unipcomm}
  There is an isomorphism $\Comm( \bfu(S) ) \cong \Aut(\bfu)(\Q)$.
\end{theorem}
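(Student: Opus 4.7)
The plan is to invoke Mal'cev rigidity for lattices in simply connected nilpotent Lie groups, together with unique radicability of $\bfu(\Q)$, to show every commensuration of $\bfu(S)$ arises from a unique $\Q$-defined automorphism of $\bfu$. Proposition \ref{qautoms} supplies the natural map $\Theta : \Aut_\Q(\bfu) \to \Comm(\bfu(S))$, and I would establish both injectivity and surjectivity. The key preliminary observation is that for any finite-index subgroup $\Delta \leq \bfu(S)$, the intersection $\Lambda := \Delta \cap \bfu(\Z)$ has finite index in the uniform lattice $\bfu(\Z)$ of $\bfu(\R)$ (by Mal'cev's embedding theorem); in particular $\Lambda$ is itself a uniform lattice, hence Zariski-dense in $\bfu$. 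Injectivity of $\Theta$ follows immediately, since any $\Q$-defined automorphism of $\bfu$ restricting to the identity on a Zariski-dense subgroup of $\bfu$ must be trivial.

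For surjectivity, given a partial automorphism $\phi : \Delta_1 \to \Delta_2$, set $\Lambda_1 := \Delta_1 \cap \bfu(\Z)$. Mal'cev's rigidity theorem extends $\phi|_{\Lambda_1}$ uniquely to an $\R$-algebraic endomorphism $\tilde\phi : \bfu \to \bfu$. Because $\Lambda_1$ is Zariski-dense in $\bfu$ and $\tilde\phi(\Lambda_1) = \phi(\Lambda_1) \subseteq \bfu(\Q)$, the map $\tilde\phi$ is in fact $\Q$-defined: passing through the $\exp/\log$ correspondence, $\log(\Lambda_1)$ $\Q$-spans $\uu(\Q)$, and the induced Lie algebra map sends this $\Q$-span into $\uu(\Q)$, so the matrix of $d\tilde\phi$ in any $\Q$-basis of $\uu(\Q)$ is $\Q$-rational. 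Applying the same argument to $\phi^{-1}$ produces a $\Q$-defined inverse, so $\tilde\phi \in \Aut_\Q(\bfu)$.

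Finally, I would verify $\Theta(\tilde\phi) = [\phi]$, that is, $\tilde\phi$ and $\phi$ agree on a finite-index subgroup of $\bfu(S)$. For any $\gamma \in \Delta_1$, some positive power $\gamma^n$ lies in $\Lambda_1$, since every element of $\bfu(\Q)$ has a positive power in $\bfu(\Z)$ (clear denominators after applying $\log$, then exponentiate). Then $\phi(\gamma)^n = \phi(\gamma^n) = \tilde\phi(\gamma^n) = \tilde\phi(\gamma)^n$, and unique radicability of the divisible torsion-free nilpotent group $\bfu(\Q)$ forces $\phi(\gamma) = \tilde\phi(\gamma)$. The main technical point is ensuring that the $\R$-algebraic extension produced by Mal'cev's theorem is actually $\Q$-defined; this is handled by the Zariski-density and $\Q$-span observations above, after which the remainder of the argument is a bookkeeping exercise using unique radicability.
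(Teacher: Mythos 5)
Your proof is correct and follows essentially the same approach as the paper's: both extend a partial automorphism from the arithmetic lattice $\bfu(\Z)$ to a $\Q$-defined automorphism of $\bfu$ via Mal'cev rigidity, then use Zariski density for uniqueness and unique radicability to identify the extension with the original commensuration. The paper constructs the map $\xi : \Comm(\bfu(S)) \to \Aut(\bfu)(\Q)$ directly and cites Raghunathan both for the $\Q$-definedness of the Mal'cev extension and for the dimension count showing it is an automorphism; you instead show $\Theta$ from Proposition~\ref{qautoms} is bijective, spell out the $\exp/\log$ argument for $\Q$-rationality, and deduce invertibility of the extension by applying the same construction to $\phi^{-1}$. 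These are minor stylistic variations within the same argument.
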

\begin{proof}
  Since $\bfu(S)$ has the property that for each $u\in \bfu(\Q)$ there
  is some number $k$ so that $u^k \in \bfu(\Z)$, any partial
  automorphism $\phi$ of $\bfu(S)$ is determined by its values on
  $\bfu(\Z)$. The resulting map $\restr{\phi}{\bfu(\Z)} : \bfu(\Z) \to
  \bfu(\Q)$ uniquely extends to a $\Q$-defined homomorphism $\tilde
  \phi : \bfu \to \bfu$ by a theorem of Mal'cev (see for example the
  proof of \cite[2.11, pg33]{raghunathan}.) Since the dimension of the
  Zariski-closure of $\phi( \bfu(\Z) )$ is equal to the dimension of
  $\bfu$ by \cite[2.10, pg32]{raghunathan}, the map $\hat \phi$ is a
  automorphism of $\bfu$.
  
  The assignment $[\phi] \mapsto \tilde \phi$ gives a well-defined a
  map $\xi : \Comm( \bfu(S) ) \to \Aut(\bfu)(\Q)$. We see that $\xi$
  is injective because $\bfu(S)$ is Zariski-dense in $\bfu$, and $\xi$
  is surjective because every $\Q$-defined automorphism of $\bfu$
  induces a commensuration of $\bfu(S)$ by Proposition \ref{qautoms}.
\end{proof}

Now assume that $\bfg$ is reduced. We prove Theorem \ref{reducedcomm}
using methods following those used to prove Theorems A and C of
\cite{grunewaldplatonov}.

\begin{proof}[Proof of Theorem \ref{reducedcomm}:]
  Let $\bfu$ be the unipotent radical of $\bfg$ and fix a maximal
  $\Q$-defined torus $\torus \leq \bfg$. We assume without loss of
  generality that $\Delta = (\Delta \cap \bfu) \rtimes (\Delta \cap
  \torus)$.

  Suppose $\phi : \Delta_1 \to \Delta_2$ is a partial automorphism of
  $\Delta$. By Corollary \ref{uiscomm} and Theorem \ref{unipcomm},
  $\phi$ induces a $\Q$-defined automorphism $\Phi_\bfu \in
  \Aut(\bfu)$. Define $\alpha : \bfg \to \Aut(\bfu)$ to be the map
  induced by conjugation. Note that $\restr{\alpha}{\torus}$ is
  injective since $\bfg$ is reduced.

  It is straightforward to check that for any $\delta\in \Delta_1$ we
  have 
  \[
  \Phi_u \circ \alpha(\delta) \circ \Phi_u^{-1} = \alpha( \phi(
  \delta) ).
  \]
  It follows that conjugation by $\Phi_u$ preserves $\alpha(\bfg)$
  inside $\Aut(\bfu)$. Conjugation by $\Phi_\bfu$ therefore induces an
  isomorphism between $\alpha(\torus)$ and $\alpha(\torus')$ for a
  different maximal $\Q$-defined torus $\torus'\leq \bfg$, and hence
  an isomorphism $\Phi_\torus : \torus \to \torus'$. Thus $\phi$
  determines a self-map of $\bfg$; for each $g\in \bfg$, write $g = u
  t$ for $u\in \bfu$ and $t\in \torus$ and set
  \[
  \Phi_0(g) := \Phi_\bfu(u) \Phi_\torus(t).
  \]

  One can check that $\Phi_0$ is a $\Q$-defined automorphism of $\bfg$
  extending $\phi$. However, the map $\Comm(\Delta) \to \Aut_\Q(\bfg)$
  defined by $[\phi] \mapsto \Phi_0$ is {\em not} necessarily a
  homomorphism of groups. We will show that $\Phi_0$ can be modified
  in a unique way to produce an automorphism $\Phi$ so that
  $\Phi(\delta) \phi(\delta_1)^{-1} \in Z(\bfg)$ for all $\delta \in
  \Delta$. This condition will guarantee the relation $[\phi] \mapsto
  \Phi$ defines a homomorphism.

  It is straightforward to check from our definitions that $\alpha(
  \Phi_0(\delta) \phi(\delta)^{-1} )$ is trivial for all $\delta \in
  \Delta_1$. Therefore $v(\delta) := \Phi_0(\delta) \phi(\delta)^{-1}$
  defines a function $v:\Delta_1 \to Z(\bfu)(\Q)$. One can check that
  \[
  v(\delta_1 \delta_2) = v(\delta_1) \phi(\delta_1) v(\delta_2)
  \phi(\delta_1)^{-1}. 
  \]
  That is, $\phi$ is a {\em derivation} when $Z(\bfu)(\Q)$ is given
  the structure of a left $\Delta_1$-module by $\delta \cdot z =
  \phi(\delta) z \phi(\delta)^{-1}$ for $\delta \in \Delta_1$ and $z\in
  Z(\bfu)(\Q)$. 

  The derivation $v$ is trivial on $\Delta_1 \cap \bfu$, and therefore
  descends to a derivation $\bar v: \Delta_1 \cap \torus \to
  Z(\bfu)(\Q)$. Since $\torus$ is reductive, there is an invariant
  subspace $V \subseteq Z(\bfu)(\Q)$ such that no element of $V$ is
  fixed by the action of $\torus$, i.e.~$C_V(\torus)$ is trivial. Let
  $v^\perp$ be the component of the derivation $\bar v$ in the
  submodule $V$. From a standard cohomological fact (see \cite[Ch3,
  Thm2**, pg44]{segalbook}), $v^\perp$ is an inner derivation. That
  is, there is some $x\in V$ so that $v^\perp (\delta) = \phi(\delta)
  x \phi(\delta)^{-1} x^{-1}$ for all $\delta \in \Delta \cap
  \torus$. It follows that
  \[
  v(\delta) x \phi(\delta) x^{-1} \phi(\delta)^{-1} \in Z(\bfg)(\Q).
  \]
  When $x$ is viewed as an element of $Z(\bfu)(\Q)$, the choice of $x$
  is unique up to $Z(\bfg)(\Q)$.

  Given $\Phi_0$ and $x$ as above, the assignment $\mu(\phi) = x
  \Phi_0 x^{-1}$ determines a well-defined map 
  \[
  \mu : \Comm(\Delta) \to \Aut(\bfg)(\Q).
  \] 
  One can check using an obvious modification of
  \cite[2.9]{grunewaldplatonov} that $\mu$ is a homomorphism. Because
  $\Gamma$ is Zariski-dense in $\bfg$, the map 
  \[\Theta : \Aut_\Q(\bfg) \to \Comm(\bfg(S))\]
  of Proposition \ref{qautoms} is injective. Therefore $\Theta$ is a
  section of $\mu$, so there is an isomorphism
  \[
  \Comm(\Delta) \cong \ker(\mu) \rtimes \Aut(\bfg)(\Q).
  \]
  
  Now suppose that $[\phi] \in \ker(\mu)$. It follows from the above
  that $\phi$ is a virtual homomorphism $\Delta \to Z(\bfg)(\Q)$
  trivial on $\Delta \cap \bfu$. We can view $\phi$ as a virtual
  homomorphism $\Delta \cap \torus \to Z(\bfg)(\Q)$. Since $\Delta
  \cap \torus$ is virtually $\Z^{N}$, the group of equivalence classes
  of such virtual homomorphisms is isomorphic to $\Hom( \Q^{N},
  Z(\bfg)(\Q) )$. We therefore have a well-defined map
  \[
  \xi : \ker(\mu) \to \Hom( \Q^{N}, Z(\bfg)(\Q) ).
  \]
  Clearly $\xi$ is injective. On the other hand, suppose that $[\Delta
  \cap \torus: \Lambda ] < \infty$ and that $f : \Lambda \to
  Z(\bfg)(\Q)$ is a homomorphism. There is a finite-index subgroup
  $\widetilde \Lambda \leq \Lambda$ so that $f(\widetilde \Lambda)
  \leq Z(\bfg)(S)$. The map 
  \[\phi : \bfu(S) \rtimes \widetilde \Lambda
  \to \bfu(S) \rtimes \widetilde \Lambda
  \]
  defined by $\phi(u, \lambda) = (u\cdot f(\lambda), \lambda)$ induces
  a commensuration of $\Delta$ mapping to $f$ under $\xi$, hence $\xi$
  is surjective. This completes the proof of Theorem \ref{reducedcomm}.
\end{proof}

Now consider the case that $\bfg$ is a connected solvable group, not
necessarily reduced. Assume for the rest of this section that $S \neq
\emptyset$. (The case that $S=\emptyset$ is addressed by the remarks
following the statement of Theorem \ref{mainthm}.) Our primary goal is
to reduce to a situation where Theorem \ref{reducedcomm} can be
applied. This reduction will occur over several steps.

Define $\torus_0 \leq \torus$ to be the centralizer of $\bfu$ in
$\torus$, a $\Q$-defined subgroup of $\torus$. There is a $\Q$-defined
subgroup $\torus_1 \leq \torus$ such that $\torus = \torus_0 \torus_1$
and $\torus_0 \cap \torus_1$ is finite. Without loss of generality we
replace $\bfg$ by $\bfg / (\torus_0 \cap \torus_1)$ and henceforth
assume that $\torus_0 \cap \torus_1 = \{1\}$. Note that now $\bfu
\rtimes \torus_1$ is a reduced solvable $\Q$-group. Moreover, without
loss of generality we replace $\Gamma_\torus$ with $\Gamma_0 \times
\Gamma_1$, where $\Gamma_i \cong \Z^{N_i}$ is an $S$-arithmetic
subgroup of $\torus_i$ for each $i=0,1$. See Theorem \ref{storus} for
the formula used to determine $N_i$.

From the semidirect product decomposition $\Gamma = ( \bfu(S) \times
\Gamma_0 ) \rtimes \Gamma_1$, let us denote elements of $\Gamma$ by
triples $(u, \gamma_0, \gamma_1)$, where $u\in \bfu(S)$ and $\gamma_i
\in \Gamma_i$ for $i=0,1$.

Define $\zug = Z(\Gamma) \cap \bfu$. Clearly we have
\[
Z(\Gamma) = \zug \times \Gamma_0.
\]
If $\Delta$ is any finite-index subgroup of $\Gamma$, then $Z(\Delta)
= \Delta \cap Z(\bfg)$ by Zariski-density of $\Delta$. It follows that
$Z(\Gamma)$ is strongly commensuristic in $\Gamma$.

Any virtual homomorphism $\alpha: \Gamma_0 \to \zug$
determines a partial automorphism $\psi_\alpha$ of $\Gamma$ defined on
an appropriate subgroup of $\Gamma$ by 
\[
\psi_\alpha(u, \gamma_0, \gamma_1) := (u + \alpha(\gamma_0),
\gamma_0, \gamma_1).
\]
Let $\mathcal{V}$ denote the subgroup of $\Comm(\Gamma)$ arising in
this way from equivalence classes of virtual homomorphisms $\Gamma_0
\to \zug$. There is an isomorphism
\[
\mcv \cong \Hom\left( \Q^{N_0}, (Z(\bfg)\cap \bfu)(\Q) \right).
\] 

Define
\[
\cog := \set{ \phi \in \Comm(\Gamma) \suchthat
  \phi(\Gamma_0) \subseteq \Gamma_0 }.
\]
Since $Z(\Gamma)$ and $\bfu(S)$ are each strongly commensuristic in
$\Gamma$, we know that $\zug$ is strongly commensuristic in
$\Gamma$. From this we see that $\cog$ normalizes $\mathcal{V}$. Thus
we may form the (semidirect) product
\[\mathcal{V} \cdot \cog \leq \Comm(\Gamma).\]

\begin{proposition} \label{centralreduction}
  $\mcv \cdot \cog = \Comm(\Gamma)$.
\end{proposition}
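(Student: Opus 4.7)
The plan is to show the nontrivial inclusion $\Comm(\Gamma) \subseteq \mcv \cdot \cog$: given any class $[\phi] \in \Comm(\Gamma)$, I will construct a virtual homomorphism $\alpha : \Gamma_0 \to \zug$ so that $\psi_\alpha^{-1} \circ \phi$ lies in $\cog$. Fix a representative $\phi : \Delta \to \Delta'$ of $[\phi]$. Since $Z(\Gamma)$ and $\zug$ are both strongly commensuristic in $\Gamma$ (as already noted in the paragraphs preceding the statement), $\phi$ restricts to a partial automorphism of the abelian group $Z(\Gamma) = \zug \times \Gamma_0$ that preserves $\zug$, and therefore descends to a partial automorphism $\bar\phi$ of the quotient $Z(\Gamma)/\zug \cong \Gamma_0$.

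Next, I would produce $\alpha$ using the decomposition $Z(\Gamma) = \zug \times \Gamma_0$. Strong commensuristicity of $Z(\Gamma)$ gives $\phi(\Delta \cap \Gamma_0) \subseteq Z(\Gamma)$, so each $\phi(\gamma_0)$ has a unique decomposition $\phi(\gamma_0) = z(\gamma_0) \cdot \bar\phi(\gamma_0)$ with $z(\gamma_0) \in \zug$. Abelianness of $Z(\Gamma)$ makes $z : \Delta \cap \Gamma_0 \to \zug$ a group homomorphism, and since $\bar\phi$ is a virtual automorphism of $\Gamma_0 \cong \Z^{N_0}$, the rule $\alpha := z \circ \bar\phi^{-1}$ defines a virtual homomorphism $\Gamma_0 \to \zug$ and hence a class $[\psi_\alpha] \in \mcv$.

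The final step is to verify that $\psi_\alpha^{-1} \circ \phi$ lies in $\cog$. Using the explicit formula $\psi_\alpha^{-1}(u, \gamma_0, \gamma_1) = (u \cdot \alpha(\gamma_0)^{-1}, \gamma_0, \gamma_1)$ together with the centrality of $\zug$ in $\bfg$ (which holds because $\zug$ commutes with the Zariski-dense subgroup $\Gamma$), a direct calculation yields
\[
\psi_\alpha^{-1}\bigl(\phi(\gamma_0)\bigr) = \bigl(z(\gamma_0) \cdot \alpha(\bar\phi(\gamma_0))^{-1}\bigr) \cdot \bar\phi(\gamma_0) = \bar\phi(\gamma_0) \in \Gamma_0,
\]
where the last equality uses the defining identity $\alpha \circ \bar\phi = z$. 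Thus $\psi_\alpha^{-1} \circ \phi$ carries a finite-index subgroup of $\Gamma_0$ into $\Gamma_0$ and so represents a class in $\cog$, giving $[\phi] = [\psi_\alpha] \cdot [\psi_\alpha^{-1} \circ \phi] \in \mcv \cdot \cog$. The main obstacle I anticipate is the finite-index bookkeeping needed to confirm that $\bar\phi$ is genuinely a partial automorphism of $\Gamma_0$ with virtual inverse defined on a finite-index subgroup, so that $\alpha$ is well-defined as a class in $\mcv$; once the strong commensuristicity of $\zug$ is in hand, this and the remaining manipulations are formal consequences of the abelianness of $Z(\Gamma)$.
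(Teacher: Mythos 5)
Your proof is correct and follows essentially the same route as the paper: both split $\phi|_{\Gamma_0}$ via strong commensuristicity of $\zug$ into a $\zug$-component and a $\Gamma_0$-component, then twist by an element of $\mcv$ to kill the $\zug$-component. The only cosmetic difference is that you build the correcting virtual homomorphism as $\alpha = z\circ\bar\phi^{-1}$ and apply $\psi_\alpha^{-1}$, while the paper names the $\zug$-component $\alpha$, sets $\beta = -\alpha\circ\nu^{-1}$, and applies $\psi_\beta$ --- these agree since $\psi_\alpha^{-1} = \psi_{-\alpha}$.
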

\begin{proof}
  Suppose $\phi: H \to K$ is a partial automorphism of $\Gamma$. Since
  $\zug$ is strongly commensuristic, $\phi$ induces a commensuration
  $[\nu] \in \Comm(\Gamma_0)$. There is a function $\alpha : H \cap
  \Gamma_0 \to K \cap \zug$ so that
  \[
  \phi( 0, \gamma_0, 0) = (\alpha(\gamma_0), \nu(\gamma_0), 0) \text{
    for all }\gamma_0 \in H\cap \Gamma_0.
  \]
  In fact, it is easy to check that $\alpha$ is a virtual homomorphism
  $\Gamma_0 \to \zug$.

  Define a virtual homomorphism $\Gamma_0 \to \zug$ by
  $\beta = - \alpha\circ \nu^{-1}$. A straightforward computation
  shows that
  \[
  (\psi_\beta \circ \phi) (0, \gamma_0, 0) = (0, \nu(\gamma_0), 0) \text{
    for all }\gamma \in \Gamma_0.
  \]
  This means that $\psi_\beta \circ \phi \in \cog$, which completes
  the proof.
\end{proof}

We now turn to the task of elucidating the structure of $\cog$. There
is a natural map
\[
\xi : \cog \to \Comm(\Gamma / \Gamma_0).
\]
Define $\Comm_\torus(\Gamma)$ to be the kernel of $\xi$. Because $\Gamma /
\Gamma_0$ is naturally identified with the subgroup $\bfu(S) \rtimes
\Gamma_1 \leq \Gamma$, it is easy to see that $\xi$ is
surjective. Therefore there is a short exact sequence
\begin{equation} \label{commsplit}
  1 \to \Comm_\torus(\Gamma) \to \cog \to \Comm(\Gamma / \Gamma_0)
\to 1.
\end{equation}
Because $\Gamma$ decomposes as a direct product $\Gamma = (\bfu(S)
\rtimes \Gamma_1) \times \Gamma_0$, the sequence (\ref{commsplit})
splits and we can identify $\Comm(\Gamma / \Gamma_0) \cong
\Comm(\bfu(S)\rtimes \Gamma_1)$. Then by Theorem \ref{reducedcomm}
there is an isomorphism
\[
\Comm( \Gamma / \Gamma_0 ) \cong \Hom(\Z^{N_1}, Z(\bfu \rtimes
\torus_1)(\Q)) \rtimes \Aut(\bfu \rtimes \torus_1)(\Q).
\]

\begin{lemma} \label{comm0} Let $\Gamma_i \cong \Z^{N_i}$ for $i=0,1$
  be as above. There is an isomorphism
  \[
  \Comm_\torus(\Gamma) \cong \GL_{N_0}(\Q) \ltimes \Hom(\Q^{N_1} , \Q^{N_0} ),
  \]
  where the action is by postcomposition.
\end{lemma}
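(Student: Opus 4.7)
The plan is to parameterize classes in $\Comm_\torus(\Gamma)$ by pairs $(A, \psi)$ with $A \in \GL_{N_0}(\Q)$ and $\psi \in \Hom(\Q^{N_1}, \Q^{N_0})$, then verify that composition of partial automorphisms induces the semidirect-product law with action by postcomposition.

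First, I would show that any $[\phi] \in \Comm_\torus(\Gamma)$ represented by $\phi : H \to K$ has the form $\phi(\gamma) = \gamma \cdot c(\gamma)$ for a group homomorphism $c : H \to \Gamma_0$ vanishing on $H \cap \bfu(S)$. Triviality of the induced commensuration on $\Gamma/\Gamma_0$ gives, after shrinking $H$, that $c(\gamma) := \phi(\gamma)\gamma^{-1} \in \Gamma_0$; centrality of $\Gamma_0$ in $\Gamma$ then forces $c$ to be a homomorphism. Corollary \ref{uiscomm} ensures $\phi(H \cap \bfu(S)) \subseteq \bfu(S)$, so for $u \in H \cap \bfu(S)$ we have $c(u) \in \bfu(S) \cap \Gamma_0 \subseteq \bfu \cap \torus = \{1\}$, and $c$ vanishes on $\bfu(S)$. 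Thus $c$ is completely determined by its restrictions $c_0 := c|_{H \cap \Gamma_0}$ and $c_1 := c|_{H \cap \Gamma_1}$.

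Next I would convert $(c_0, c_1)$ into the claimed invariants. The restriction $\phi|_{H \cap \Gamma_0}$ is a partial automorphism of $\Gamma_0 \cong \Z^{N_0}$, determining an element $A \in \Comm(\Gamma_0) \cong \GL_{N_0}(\Q)$, which in additive notation is $A = \Id + c_0$. The restriction $c_1$ is a virtual homomorphism $\Gamma_1 \to \Gamma_0$ whose equivalence class is an element $\psi \in \Hom(\Q^{N_1}, \Q^{N_0})$. Conversely, given $(A, \psi)$, I would pass to a finite-index subgroup $\Gamma' \leq \Gamma$ on which $A - \Id$ and $\psi$ take values in $\Gamma_0$ and define
\[
\phi(u, \gamma_0, \gamma_1) \; = \; \bigl(u,\, A(\gamma_0) + \psi(\gamma_1),\, \gamma_1\bigr);
\]
centrality of $\Gamma_0$ makes this a homomorphism whose class in $\Comm_\torus(\Gamma)$ recovers $(A, \psi)$.

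Finally I would work out the group law: for classes with cocycles $c^{(1)}, c^{(2)}$, the identity $\phi_1 \phi_2(\gamma) = \gamma \cdot c^{(1)}(\gamma) \cdot c^{(2)}(\gamma) \cdot c^{(1)}(c^{(2)}(\gamma))$ holds because $c^{(2)}(\gamma) \in \Gamma_0 \leq Z(\Gamma)$, and separating the $\Gamma_0$- and $\Gamma_1$-components yields $A_{12} = A_1 A_2$ and $\psi_{12} = \psi_1 + A_1 \circ \psi_2$, matching multiplication in $\GL_{N_0}(\Q) \ltimes \Hom(\Q^{N_1}, \Q^{N_0})$ with action by postcomposition. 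The main obstacle will be the finite-index bookkeeping, especially in the surjective half: given $(A, \psi)$ one must carefully choose the domain so that $A - \Id$ and $\psi$ take integral values in $\Gamma_0$ rather than only in $\Gamma_0 \otimes \Q$, and verify that different choices produce equivalent partial automorphisms.
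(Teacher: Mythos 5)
Your proof is correct and takes essentially the same approach as the paper: write $\phi(\gamma) = \gamma\cdot c(\gamma)$ with $c : H \to \Gamma_0$ a homomorphism (by centrality of $\Gamma_0$), show $c$ vanishes on $\bfu(S)$, and decompose the remaining data into $(A,\psi) \in \GL_{N_0}(\Q) \ltimes \Hom(\Q^{N_1},\Q^{N_0})$. The only minor difference is the route to $c|_{\bfu(S)} = 0$: the paper shows there are no nontrivial virtual homomorphisms $\bfu(S) \to \Gamma_0$ by invoking $p$-radicability of $\bfu(S)$ for $p\in S$ together with freeness of $\Gamma_0$, whereas you derive it from the strong commensuristicity of $\bfu(S)$ (Corollary \ref{uiscomm}) combined with $\bfu(S)\cap\Gamma_0 = \{1\}$; both routes are valid and in fact both ultimately rest on the same radicability lemma.
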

\begin{proof}
  There is homomorphism $\Psi : \Comm_\torus(\Gamma) \to \GL_{N_0}(\Q)$
  given by restriction to $\Gamma_0$. Because $\Gamma_0$ splits off as
  a direct product factor, $\Psi$ is surjective and the following
  exact sequence splits:
  \[ 1 \to \ker(\Psi) \to \Comm_\torus(\Gamma) \to \GL_{N_0}(\Q) \to 1.\] 

  The kernel of $\Psi$ is given by equivalence classes of virtual
  homomorphisms $\bfu(S) \rtimes \Gamma_1 \to \Gamma_0$. There are no
  virtual homomorphisms $\bfu(S) \to \Gamma_0$ because $\Gamma_0$ is
  free abelian and every finite-index subgroup of $\bfu(S)$ is
  $p$-radicable for any $p\in S$. Therefore the kernel of $\Psi$ may
  be identified with equivalence classes of virtual homomorphisms from
  $\Gamma_1$ to $\Gamma_0$, which form a group isomorphic to $\Hom(
  \Q^{N_1} , \Q^{N_0} )$.
\end{proof}

We now complete the proof of the main theorem of this paper in the
case $S\neq \emptyset$:

\begin{proof}[Proof of Theorem \ref{mainthm}]
  By Proposition \ref{centralreduction}, we have the decomposition
  \begin{equation} \label{centraleqn}
    \Comm(\Gamma) \cong \Hom\left( \Q^{N_0}, (Z(\bfg) \cap \bfu)(\Q)
    \right) \rtimes \cog.
  \end{equation}
  From the split exact sequence (\ref{commsplit}), there is a
  semidirect product decomposition
  \begin{equation} \label{topprod} \cog \cong \Comm_\torus(\Gamma)
    \rtimes \Comm( \Gamma / \Gamma_0 ).
  \end{equation}
  By Lemma \ref{comm0} we know
  \begin{equation} \label{comm0eqn}
  \Comm_\torus(\Gamma) \cong \GL_{N_0}(\Q) \ltimes \Hom(\Q^{N_1} , \Q^{N_0} ).
  \end{equation}
  By Theorem \ref{reducedcomm} we know
  \begin{equation} \label{reducedeqn}
  \Comm( \Gamma / \Gamma_0 ) \cong \Hom(\Q^{N_1}, Z(\bfu \rtimes
  \torus_1)(\Q)) \rtimes \Aut(\bfu \rtimes \torus_1)(\Q).
  \end{equation}
  
  To understand the action of Equation \ref{centraleqn}, note that
  there are maps from $\cog$ to both $\GL_{N_0}(\Q)$ and
  $\Aut(Z(\bfg)\cap \bfu)(\Q)$. The action factors through these maps,
  and $\GL_{N_0}(\Q)$ and $\Aut( Z(\bfg) \cap \bfu) (\Q)$ act by pre-
  and post-composition, respectively.
 
  The action of Equation \ref{topprod} factors through the map
  \[\Comm(\Gamma / \Gamma_0) \to \Aut(\torus_1)(\Q).\] 
  The action of the latter on $\Comm_\torus(\Gamma)$ is simply
  precomposition in the $\Hom( \Q^{N_1} , \Q^{N_0} )$ factor. Note
  that $\Aut(\torus_1)(\Q)$ is finite by rigidity of tori.
  
  If $\bfa$ and $\bfb$ are $\Q$-groups, and $\bfa$ acts on $\bfb$ so
  that the map $\bfa \times \bfb \to \bfb$ is defined over $\Q$, then
  the semidirect product $\bfb \rtimes \bfa$ has the structure of a
  $\Q$-group. Each of the semidirect products of Equations
  \ref{centraleqn}, \ref{topprod}, \ref{comm0eqn}, and
  \ref{reducedeqn} satisfies this condition. It follows that
  $\Comm(\Gamma)$ has the structure of a $\Q$-group.
\end{proof}

\section{Positive characteristic} \label{poscharsection}

Linear algebraic groups can be defined over arbitrary fields. Let $K$
be a global field and $S$ a set of multiplicative valuations of
$K$. The ring of {\em $S$-integral} elements of $K$, denoted $K(S)$,
is the ring of $x\in K$ such that $v(x) \leq 1$ for each
non-Archimedean valuation $v\notin S$. If $\bfg$ is a linear algebraic
group defined over $K$, let $\bfg(K(S))$ denote the group of matrices
in $\bfg$ with entries in $K(S)$. See \cite[Chapter I]{margulis} for
details.

We will be concerned only with specific examples. In what follows we
use the global field $K = \F_q(t)$, the field of rational functions in
one variable over the finite field with $q$ elements. Choose $S =
\{v_t, v_\infty\}$, where the valuations $v_\infty$ and $v_t$ are
defined as follows: Given any $r \in \F_q(t)$, write $r(t) = t^k (
f(t) / g(t) )$, where $f$ and $g$ are polynomials with nontrivial
constant term and $k\in \Z$. Then define
\[ v_t( r ) = q^{-k} \text{ and } v_\infty( r ) = q^{ \deg(f) +
  k - \deg(g) }.
\]
In this case, $K(S)$ is the ring of Laurent polynomials over $\F_q$,
denoted $\F_q[t,t^{-1}]$.

\begin{example} \label{poscharex1}
  Consider the 1-dimensional additive algebraic group
  \[
  \bfg_a = \left\{ \begin{pmatrix} 1 & * \\ 0 & 1 \end{pmatrix}
  \right\} \subseteq \GL_2.
  \]
  Then $\bfg_a(K(S)) \cong K(S)$ is an $S$-arithmetic group. There is
  an isomorphism of abstract groups 
  \[
  K(S) \cong \bigoplus_{k=-\infty}^\infty \F_q.
  \]

  \begin{proposition} \label{commtoobig}
    For any field $F$ and any linear algebraic group $\bfg$ over $F$,
    there is no embedding $\Comm( K(S) ) \to \bfg(F)$.
  \end{proposition}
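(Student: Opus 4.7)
The plan is to exhibit, for every prime $p$, an elementary abelian $p$-subgroup of $\Comm(K(S))$ of countably infinite rank, and then invoke the standard fact that for any field $F$ and any $N$, an elementary abelian $p$-subgroup of $\GL_N(F)$ with $p \neq \mathrm{char}(F)$ has rank at most $N$. Since every linear algebraic group $\bfg$ over $F$ sits inside some $\GL_N$ so that $\bfg(F) \leq \GL_N(F)$, choosing any prime $p$ distinct from $\mathrm{char}(F)$ (which exists as there are infinitely many primes) will force a contradiction.

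First I would identify $V := K(S) = \F_q[t,t^{-1}]$ with the abstract abelian group $\bigoplus_{k \in \Z} \F_q$ by sending $t^k$ to the $k$-th basis vector. Any $\F_q$-linear automorphism of $V$ is in particular a group automorphism and hence induces a commensuration; more precisely, every partial automorphism of $V$ extends to a group automorphism of $V$ (finite-codimension $\F_p$-subspaces all have isomorphic finite-dimensional complements), so $\Comm(V)$ is the quotient of $\Aut(V)$ by the normal subgroup $K$ of automorphisms $g$ such that $g - \Id$ has finite rank, equivalently those restricting to the identity on a finite-$\F_p$-codimension subgroup.

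For a fixed prime $p$, I would construct commuting order-$p$ elements $\sigma_1, \sigma_2, \ldots$ in $\Comm(V)$ as follows. Choose pairwise disjoint infinite subsets $B_1, B_2, \ldots \subset \Z$, partition each $B_k$ into disjoint $p$-element blocks, and let $\sigma_k$ act on $V$ by cyclically permuting the basis vectors in each block of $B_k$ and fixing all other basis vectors. These $\sigma_k$ commute pairwise and each has order $p$, so they generate a quotient of $\bigoplus_{k} \Z/p\Z$. The critical verification is that every nontrivial word $\sigma_1^{a_1}\cdots\sigma_m^{a_m}$ (with $a_i \in \{0,\ldots,p-1\}$, not all zero) is nontrivial in $\Comm(V)$: for each $i$ with $a_i \neq 0$ the word acts as a nontrivial $p$-cycle on each of the infinitely many blocks in $B_i$, so its fixed subspace has $\F_p$-codimension bounded below by $(p-1)\cdot\aleph_0$, hence is not finite-codimension, and the word does not represent the identity commensuration. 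I expect this to be the main obstacle, because the analogous construction using finitary permutations collapses to the trivial element of $\Comm(V)$ (a finitary permutation is the identity on the finite-codimension subspace spanned by its orbit-sums); this is precisely why each $\sigma_k$ must be built from infinitely many disjoint $p$-cycles.

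Finally, suppose for contradiction that $\Comm(V) \hookrightarrow \bfg(F) \leq \GL_N(F)$, and pick a prime $p \neq \mathrm{char}(F)$. The image of the constructed subgroup is an elementary abelian $p$-subgroup of $\GL_N(F)$ of infinite rank. But since $p \neq \mathrm{char}(F)$, the polynomial $x^p - 1$ is separable over $F$, so every order-$p$ element of $\GL_N(F)$ is semisimple; a commuting family of semisimple matrices is simultaneously diagonalizable over $\bar{F}$. Thus any elementary abelian $p$-subgroup of $\GL_N(F)$ injects into the $p$-torsion of the diagonal torus $(\bar{F}^\times)^N$, namely $\mu_p(\bar{F})^N \cong (\Z/p\Z)^N$, which has rank $N$, contradicting the infinite rank produced above.
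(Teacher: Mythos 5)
Your proof is correct and takes a genuinely different route to the same ``too big to be linear'' conclusion. The paper uses an analogous block-diagonal construction to embed $\GL_n(\F_q)$ into $\Comm(K(S))$ for every $n$, concludes that $\Comm(K(S))$ contains every finite group, and then cites a theorem of Serre asserting that $\GL_d(F)$ cannot contain every finite group. You instead produce for each prime $p$ a copy of $(\Z/p\Z)^{(\aleph_0)}$ inside $\Comm(K(S))$ and invoke the elementary rank bound for elementary abelian $p$-subgroups of $\GL_N(F)$ with $p \neq \mathrm{char}(F)$. The two constructions are cousins, but your final step replaces the appeal to Jordan/Serre-type bounds with a short self-contained argument via simultaneous diagonalization, which is a genuine simplification. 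One incidental remark of yours is false, though unused: it is not true that every partial automorphism of $V = \bigoplus_{\Z} \F_q$ extends to an element of $\Aut(V)$. A class $[\phi]$ with $\phi \colon W_1 \to W_2$ is represented by a global automorphism only when $\dim(V/W_1) = \dim(V/W_2)$, and this can fail (e.g.\ for a one-sided shift onto a codimension-one subspace), so $\Comm(V)$ is not a quotient of $\Aut(V)$. Your main argument uses only the map $\Aut(V) \to \Comm(V)$ and the correct description of its kernel as automorphisms fixing a finite-codimension subspace pointwise, so this does not affect the proof.
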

  \begin{proof}
    It suffices to treat the case that $\bfg = \GL_d$ for some $d$. We
    will show that $\Comm(K(S))$ contains $\GL_n(\F_q)$ for every
    $n$, which implies that $\Comm(K(S))$ contains every finite
    group. This completes the proof, since $\GL_d(F)$ does not contain
    every finite group. (See for example \cite[Thm5]{serrebounds}.)
    
    For each $n\in \N$, embed $\GL_n(\F_q)$ into $\Comm( K(S) )$
    `diagonally' as follows: Let $V = \oplus_{k=-\infty}^\infty \F_q$,
    and for each $\ell \in \Z$ define a subgroup $V_\ell \leq V$ by
    $V_\ell = \oplus_{k= n \ell}^{n(\ell+1) - 1} \F_q$. Given any
    automorphism $\phi \in \GL_n(\F_q)$, define an automorphism $\Phi
    \in \Aut(V)$ piecewise by $\restr{\Phi}{V_\ell} =\phi$. In this way
    every nontrivial element of $\GL_n(\F_q)$ determines a nontrivial
    commensuration of $V \cong K(S)$.

  \end{proof}
  
  In particular, Proposition \ref{commtoobig} implies that Theorem
  \ref{mainthm} does not hold when $\Q$ is replaced by a global field
  of positive characteristic.
\end{example}

\begin{example}[Lamplighter group] \label{poscharex2}
  Consider the algebraic group
  \[
  \bfb_2 = \left\{ \begin{pmatrix} x & z \\ 0 & y \end{pmatrix}
    \suchthat xy=1 \right \} \subseteq \GL_2.
  \]
  Set $q=2$. The $S$-arithmetic group $\bfb_2( \F_2[t,t^{-1}] )$ is
  isomorphic to the (restricted) wreath product $\F_2^2\wr \Z$, which
  is an index 2 subgroup of the {\em lamplighter group} $\F_2 \wr
  \Z$. The lamplighter group is isomorphic to the semidirect product 
  \[\left(\bigoplus_{\Z} \Z / 2\Z \right) \rtimes \Z,\]
  where the $\Z$ acts by permutation of the $\Z/2\Z$ factors through
  the usual left action on the index set.

  The abstract commensurator of $\F_2 \wr \Z$ is fairly complicated,
  and has not been well-studied. See \S\ref{appendix} for a more
  detailed discussion of $\Comm(\F_2 \wr \Z)$. For now we use the fact
  that $\Comm( \F_2 \wr \Z)$ contains the direct limit
  \[
  \varinjlim_{n\in \N} \Aut(\F_2^{n}),
  \]
  where the maps are the diagonal inclusions of $\Aut( \F_2^{n} )$
  into $\Aut( \F_2^{m} )$ whenever $n \mid m$. It follows now as in
  Proposition \ref{commtoobig} that $\Comm( \bfb_2( \F_2[t,t^{-1}] )
  )$ is not a linear group over any field. This shows that Theorem
  \ref{mainthm} does not apply in positive characteristic even in the
  presence of a nontrivial action by a torus.
\end{example}

\section{Commensurations of the lamplighter group} \label{appendix}

Define $\K$ to be the direct product 
\[ \K := \bigoplus_{\Z} \Z / 2\Z. \] The group of integers $\Z$ acts on
itself by left-translation, inducing an action on $K$ by permutation
of indices. The {\em lamplighter group}, which we will denote by
$\Gamma$ throughout this section, is the semidirect product $\Gamma =
\K \rtimes \Z$. The goal of this section is to show that
$\Comm(\Gamma)$ admits the following decomposition.
\begin{lamptheorem}
  Using the definitions of Equations \ref{vderdefn} and
  \ref{dsinftydefn} below, there is an isomorphism
  \begin{equation} \label{lampdesc} \Comm(\Gamma) \cong ( \VDer(\Z,
    \K) \rtimes \Comm_\infty(\K) ) \rtimes (\Z / 2\Z).
\end{equation}
\end{lamptheorem}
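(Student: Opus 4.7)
The plan is to exploit the fact that $\K$ is precisely the torsion subgroup of $\Gamma$---indeed, an element $(v, n) \in \K \rtimes \Z$ has finite order iff $n = 0$---so $\K$ is characteristic in $\Gamma$ and hence strongly commensuristic. This produces two restriction homomorphisms
\[
R : \Comm(\Gamma) \to \Comm(\K) \qquad \text{and} \qquad \bar R : \Comm(\Gamma) \to \Comm(\Gamma/\K) \cong \Comm(\Z) \cong \Q^*.
\]
I would prove Theorem \ref{lampthm} by computing the image and kernel of $\bar R$ and then analyzing $\ker(\bar R)$ via $R$.

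For the image of $\bar R$, I expect it to be exactly $\{\pm 1\} \subseteq \Q^*$, accounting for the outer $\Z/2\Z$ factor. The involution $t \mapsto t^{-1}$ of $\Gamma$---accompanied by the reversal $t^k \mapsto t^{-k}$ of $\K = \F_2[t, t^{-1}]$---realizes $-1$ and provides a section. To rule out scalings $q \notin \{\pm 1\}$, note that any partial automorphism of $\Gamma$ inducing multiplication by $q$ on $\Gamma/\K$ must restrict to an additive bijection $\phi_\K$ between finite-codimension subspaces of $\K$ intertwining the shift action via $\phi_\K(t u) = t^q \phi_\K(u)$. Such a $\phi_\K$ amounts to presenting the target $\K$ as a cyclic module over $\F_2[t^q, t^{-q}]$; but $\K$ is free of rank $|q|$ over $\F_2[t^q, t^{-q}]$, so a single generator suffices only when $|q| = 1$.

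For $\ker(\bar R)$, any representative $\phi : H \to K$ of a class in the kernel can, after an appropriate inner adjustment, be written in the form $\phi(v, n) = (\phi_\K(v) + f(n), n)$, where $\phi_\K$ is a partial automorphism of $\K$ and $f$ is a function on $H \cap \Z$. The defining relations in $\Gamma$ then force $\phi_\K$ to be $t$-equivariant and $f$ to satisfy the derivation identity $f(m+n) = f(m) + t^m f(n)$, i.e., $f$ is a virtual derivation. Restriction to $\K$ thus yields a split exact sequence
\[
1 \to \VDer(\Z, \K) \to \ker(\bar R) \to \Comm_\infty(\K) \to 1,
\]
whose kernel is the group of virtual derivations and whose image is the group of shift-compatible commensurations of $\K$; this latter group should coincide with the $\Comm_\infty(\K)$ appearing in the statement. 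A natural section extends each $t$-equivariant $\phi_\K$ trivially on the $\Z$-factor. Assembling the two split extensions yields $\Comm(\Gamma) \cong (\VDer(\Z, \K) \rtimes \Comm_\infty(\K)) \rtimes (\Z/2\Z)$.

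The main obstacle I anticipate is making the cyclic-module obstruction of the second paragraph rigorous for \emph{partial} automorphisms defined on finite-codimension $\F_2$-subspaces of $\K$, rather than on all of $\K$. This requires careful bookkeeping about how commensurability interacts with the $\F_2[t, t^{-1}]$-module structure on $\K$: in particular, one must verify that passing to a finite-index subgroup on either side changes only the rank-$0$ portion of the relevant free modules, so the rank-matching argument $|m| = |n|$ still pins down $|q| = 1$. A secondary subtlety is matching the image of $R|_{\ker \bar R}$ with the intrinsic definition of $\Comm_\infty(\K)$ to be stated in Equation \ref{dsinftydefn}; both should describe the same ``shift-commensurating'' commensurations of $\K$, but the identification will need to be unfolded explicitly.
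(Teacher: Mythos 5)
Your outline follows essentially the same route as the paper: identify $\K$ as the torsion subgroup (hence strongly commensuristic), split off $\Z/2\Z$ via the map to $\Comm(\Gamma/\K) \cong \Comm(\Z)$, and then analyze $\ker(\bar R)$ through restriction to $\K$, with $\VDer(\Z,\K)$ as kernel and $\Comm_\infty(\K)$ as image. The one place your argument genuinely diverges is the proof that the image of $\bar R$ is $\{\pm 1\}$: the paper computes the coinvariants $\K_i/[\K_i,G_i] \cong (\Z/2\Z)^{m_i}$ by an explicit combinatorial construction (the sets $S_i$, $P_i$, $Q_i$), whereas you propose a module-theoretic rank count over the PID $\F_2[t^{m},t^{-m}]$. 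Your route is cleaner and your flagged worry is actually a non-issue: over a PID, a finite-index submodule of a free module of rank $r$ is itself free of rank $r$, so commensurability does not disturb the rank comparison and $m_1 = m_2$ follows immediately.

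Two small imprecisions worth correcting. First, the intertwining relation $\phi_\K(tu) = t^q\phi_\K(u)$ is not literally meaningful when $q = m_2/m_1 \notin \Z$; the correct statement is $\phi_\K(t^{m_1}u) = t^{m_2}\phi_\K(u)$, and the "cyclic module" phrasing should be replaced by comparing the ranks of the two finite-index submodules over $\F_2[t^{m_1},t^{-m_1}]$ and $\F_2[t^{m_2},t^{-m_2}]$ respectively. Second, in the analysis of $\ker(\bar R)$ you write that $\phi_\K$ must be ``$t$-equivariant''---taken literally this would give $\Comm(\K)^\Z$ rather than $\Comm_\infty(\K) = \varinjlim_m \Comm(\K)^{m\Z}$. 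What is forced is only $t^m$-equivariance for the $m$ such that $t^m$ lies in the domain of $\phi$; the direct limit over $m$ is the whole point of the definition of $\Comm_\infty(\K)$. Since you correctly name the target group as $\Comm_\infty(\K)$ anyway, this is a labeling slip rather than a gap, but it should be fixed in a written-out proof.
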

See \cite{houghton} for an analogous description of automorphism
groups of unrestricted wreath products.

Let $e_i\in \Gamma$ be the element of the direct sum subgroup which is
nontrivial only the $i^{th}$ index and let $t \in \Gamma$ be a
generator for $\Z$. By definition we have the relation $t^m e_i t^{-m}
= e_{i+m}$. Then $\Gamma$ is generated by the set $\{e_0,t\}$ and has
the presentation
\[ \Gamma = \cyc{e_0, t \suchthat e_0^2=1 \text{ and } [t^k e_0
  t^{-k}, t^\ell e_0 t^{-\ell}] = 1 \text{ for all } k,\ell\in\Z }.
\]

\begin{lemma} \label{appsurj} The quotient map $\Gamma \to \Gamma /
  \K$ induces a surjective homomorphism $\Theta : \Comm(\Gamma) \to
  \Z/2\Z$.
\end{lemma}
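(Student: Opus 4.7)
The plan is to exploit the fact that $\K$ is strongly commensuristic in $\Gamma$ to obtain a homomorphism $\Comm(\Gamma) \to \Comm(\Gamma / \K) \cong \Comm(\Z) \cong \Q^*$, then to show this homomorphism lands in $\{\pm 1\} \cong \Z/2\Z$, and finally to realize the nontrivial element using an explicit automorphism of $\Gamma$.

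First, $\K$ is precisely the torsion subgroup of $\Gamma$: every element of $\K$ has order at most $2$, whereas any element outside $\K$ projects nontrivially to $\Gamma/\K \cong \Z$ and hence has infinite order. Consequently any partial automorphism $\phi : H_1 \to H_2$ of $\Gamma$ carries $H_1 \cap \K$ bijectively onto $H_2 \cap \K$, making $\K$ strongly commensuristic and giving the desired map $\Theta : \Comm(\Gamma) \to \Comm(\Z) \cong \Q^*$.

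The heart of the argument is to show that the image of $\Theta$ lies in $\{\pm 1\}$. Any finite-index subgroup of $\Gamma$ has the form $V \rtimes d\Z$, where $V \leq \K$ is an $\F_2$-subspace of finite codimension invariant under the translation $T^d$ by $d$. Consider a partial automorphism $\phi : V_1 \rtimes a\Z \to V_2 \rtimes b\Z$ whose induced map on quotients sends $a \mapsto \epsilon b$ for some $\epsilon \in \{\pm 1\}$, so that $\Theta([\phi]) = \epsilon b / a$. A direct semidirect-product computation shows that $\phi|_{V_1} : V_1 \to V_2$ intertwines the action of $T^a$ on $V_1$ with that of $T^{\epsilon b}$ on $V_2$. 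View $\K$ as a module over the PID $R_d := \F_2[s, s^{-1}]$ via $s \mapsto T^d$; then $\K$ is free of rank $d$ with basis $e_0, \ldots, e_{d-1}$. Since $V_i \leq \K$ has finite $\F_2$-codimension, the quotient $\K/V_i$ is torsion over $R_d$, so $V_i$ has full rank in this finitely generated free module over a PID and is therefore itself free of rank $d$. Applied to $V_1$ and $V_2$ these ranks are $a$ and $b$, and the existence of the module isomorphism $\phi|_{V_1}$ forces $a = b$, yielding $\Theta([\phi]) = \epsilon \in \{\pm 1\}$.

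For surjectivity, the order-$2$ automorphism $\sigma : \Gamma \to \Gamma$ defined by $\sigma(t) = t^{-1}$ and $\sigma(e_i) = e_{-i}$ is easily verified against the presentation of $\Gamma$ and descends to negation on $\Gamma/\K \cong \Z$, so $\Theta([\sigma]) = -1$. The main obstacle is the rank-counting step, which requires care in tracking that $V_1$ and $V_2$ are equipped with two distinct $\F_2[s, s^{-1}]$-module structures obtained by restricting the standard translation action on $\K$ along different subrings of $\F_2[T, T^{-1}]$, so that the comparison of ranks refers to $\K$ and not to the abstract isomorphism type of the $V_i$.
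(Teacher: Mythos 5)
Your overall plan matches the paper's: observe that $\K$ is exactly the torsion subgroup of $\Gamma$, hence strongly commensuristic, to get $\Theta : \Comm(\Gamma) \to \Comm(\Gamma/\K) \cong \Comm(\Z) \cong \Q^*$; realize the nontrivial element of $\{\pm 1\}$ via the explicit involution $t \mapsto t^{-1}$, $e_i \mapsto e_{-i}$; and then argue that the image lies in $\{\pm 1\}$. For that last, central step you take a genuinely different route. The paper passes to $\Delta_i = \K_i G_i$, arranges $G_i$ to act trivially on $\K/\K_i$, and then proves via an explicit set-counting argument that the coinvariant groups $\K_i/[\K_i,G_i]$ have order $2^{m_i}$, so that the isomorphism induced by $\phi$ forces $m_1 = m_2$. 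You instead treat $\K$ as the rank-one free module $\F_2[T,T^{-1}]$ and observe that $V_i = \Delta_i \cap \K$ is a finite-index, hence full-rank, submodule of the free $\F_2[T^{d},T^{-d}]$-module of rank $d$, so is itself free of rank $d$; the intertwining property of $\phi|_{V_1}$ then pins down $a=b$ by the well-definedness of rank over a PID. The two arguments are secretly the same invariant — the paper's coinvariants are $V_i \otimes_{R_{m_i}} \F_2$, whose $\F_2$-dimension equals the rank once one knows $V_i$ is free — but your version packages it more cleanly, avoids the ad hoc set $S_i$ and the need to normalize the action of $G_i$ on $\K/\K_i$, and makes the bookkeeping about restricting to powers unnecessary.

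One assertion you make is literally false as stated: not every finite-index subgroup of $\Gamma$ has the form $V \rtimes d\Z$. For instance, $\Delta = \{ a\,t^n : a(1) + n \equiv 0 \pmod 2 \}$ (where $a(1)$ denotes the evaluation of $a \in \K \cong \F_2[T,T^{-1}]$ at $T=1$) is an index-two subgroup with $\Delta \cap \K = \ker(a \mapsto a(1))$ and image $\Z$, but $t \notin \Delta$, so $\Delta \neq V \rtimes \Z$. This does not damage your argument, since you never actually need the element $t^a$ to lie in $\Delta_1$: any $g_1 \in \Delta_1$ mapping to a generator of the image subgroup $a\Z$ acts on $V_1$ by conjugation as $T^a$ (because $\K$ is abelian), and $\phi(g_1)$ likewise acts on $V_2$ as $T^{\pm b}$, which is all the intertwining requires. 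It would be cleaner to phrase the setup that way rather than to assert the (false) normal form for finite-index subgroups.
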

\begin{proof}
  The subgroup $\K\leq \Gamma$ is equal to the set of torsion elements
  of $\Gamma$, and is therefore strongly commensuristic. It follows
  that there is a homomorphism $\Theta : \Comm(\Gamma) \to
  \Comm(\Gamma / \K) \cong \Comm(\Z)$. The nontrivial automorphism of
  $\Z$ induces an automorphism, hence a commensuration, of $\Gamma$ by
  $t\mapsto t^{-1}$ and $e_{i} \mapsto e_{-i}$ for each $i\in \Z$. It
  remains to show that the image of $\Theta$ is contained in $\Aut(\Z)
  \leq \Comm(\Z)$.

  Suppose $\phi : \Delta_1 \to \Delta_2$ is a partial automorphism of
  $\Gamma$. In what follows, let $i=1,2$. Let $\K_i = \K \cap
  \Delta_i$. Choose $g_i \in \Delta_i$ so that its equivalence class
  $[g_i]$ generates the image of the quotient map $\Delta_i \to
  \Delta_i / \K_i$. Let $G_i = \cyc{g_i}$. Note that $\Delta_i$ admits
  a product decomposition $\Delta_i = \K_i G_i$.

  Let $m_i$ be the integer such that $g_i = a t^{m_i}$ for some $a\in
  \K_i$. Replacing $g_i$ with its inverse if necessary, assume that
  $m_i > 0$. Each group $G_i$ naturally acts on $\K / \K_i$. Since $\K
  / \K_i$ is finite, after replacing $g_i$ with a power if necessary
  we assume that the action of $G_i$ on $\K / \K_i$ is trivial for
  both $i=1,2$. Our goal is to prove $m_1 = m_2$.

  One can check that $\phi$ induces an isomorphism $[\K_1, G_1] \cong
  [\K_2, G_2]$, where $[\K_i, G_i]$ is the group generated by
  commutators of the form $[a,g] := aga^{-1}g^{-1}$ for $a\in \K_i$
  and $g\in G_i$. (In fact, in this case we know $[\K_i, G_i]$ is
  equal to the {\em set} of elements of the form $[a, g_i]$, which is
  equal to $[a, t^{m_i}]$, for some $a\in \K_i$. This is helpful in
  understanding the proof of the claim below.)  Since $\phi$ induces
  an isomorphism
  \[\K_1 / [ \K_1, G_1] \cong \K_2 / [\K_2, G_2 ],\]
  the desired result is apparent from the following claim.
  
  \bold{Claim:} There are isomorphisms $\K_i / [ \K_i, G_i] \cong (\Z/
  2\Z)^{m_i}$ for $i=1,2$.

  \bold{Proof of Claim:} Let $H_{m_i} \leq \K$ be the subgroup
  generated by the set $\{ e_0, e_1, \dotsc, e_{m_i-1} \}$. Clearly
  $H_{m_i}$ is isomorphic to $(\Z / 2\Z)^{m_i}$.  Let $P_i = \K_i
  \cap H_{m_i}$, and let $Q_i \leq H_{m_i}$ be a complement to $P_i$
  so that $H_{m_i} = P_i \oplus Q_i$. Now consider the subset $S_i
  \subseteq \K_i$ defined by
  \[
  S_i = \left\{ g\in \K \suchthat g = p [q,g_i] \text{ for some } p
    \in P_i \text{ and } q\in Q_i \right\}.
  \]

  The condition that $G_i$ act trivially on $\K / \K_i$ ensures that
  $[a,g_i] \in \K_i$ for any $a\in \K$, and so $S_i \subseteq
  \K_i$. By construction $S_i$ is in bijection with $H_{m_i}$, hence
  has cardinality $2^{m_i}$. Consider the map of sets $\rho_i : S_i
  \to \K_i / [\K_i , G_i]$ sending an element to its equivalence
  class. Since $[\K_i,G_i]$ consists of elements of the form $[a,g_i]$
  for some $a\in \K_i$, it is not hard to see from the construction of
  $S_i$ that $\rho_i$ is injective. We leave as an exercise to check
  that $\rho_i$ is surjective, which completes the proof.
\end{proof}

Let $\Theta$ be the surjection of Lemma \ref{appsurj}. The short exact
sequence
\[
 1 \to \ker(\Theta) \to \Comm(\Gamma) \to \Z / 2\Z \to 1
\]
splits, so that $\Comm(\Gamma) \cong \ker(\Theta) \rtimes (\Z /
2\Z)$. Since $\K$ is strongly commensuristic, there is a natural map
$\Phi : \ker(\Theta) \to \Comm(\K)$. We first describe the kernel of
$\Phi$ then the image of $\Phi$.

If $G$ is a group and $A$ is a $G$-module, then $\tau : G \to A$ is a
{\em derivation} if $\tau( g_1 g_2 ) = \tau(g_1) + g_1 \cdot
\tau(g_2)$ for all $g_1,g_2\in G$. The set of derivations from $G$ to
$A$ forms an abelian group denoted $\Der(G,A)$. A {\em virtual
  derivation} from $G$ to $A$ is a derivation from a finite-index
subgroup of $G$ to $A$. Two virtual derivations are {\em equivalent}
if they agree on a finite-index subgroup of $G$. The set of
equivalence classes of virtual derivations forms a group
\begin{equation}
  \label{vderdefn}
 \VDer(G, A) := \varinjlim_{[G:H]<\infty} \Der(H, A).
\end{equation}

\begin{lemma} \label{appkerphi}
  There is an isomorphism $\ker(\Phi) \cong \VDer( \Z, \K )$.
\end{lemma}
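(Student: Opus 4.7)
The plan is to construct mutually inverse group homomorphisms between $\ker(\Phi)$ and $\VDer(\Z,\K)$ by identifying a commensuration in the kernel with the ``error'' between an element of the $\Z$-quotient and its image.

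First I would unpack what it means for $[\phi] \in \Comm(\Gamma)$ to lie in $\ker(\Phi)$. Since $\K$ is strongly commensuristic (being the torsion subgroup of $\Gamma$), any representative $\phi:\Delta_1\to\Delta_2$ carries $\K \cap \Delta_1$ onto $\K\cap \Delta_2$; the vanishing of $\Phi([\phi])$ means this restriction agrees with the identity on some finite-index subgroup of $\K$, while $[\phi] \in \ker(\Theta)$ ensures the induced map on $\Gamma/\K$ agrees with the identity on a finite-index subgroup of $\Z$. Intersecting these with a suitable common finite-index subgroup of $\Delta_1$, I may choose a representative with $\Delta_1 = \Delta_2 = \K_0 \rtimes H$, where $H \leq \Z$ has finite index and $\K_0 \leq \K$ is finite-index and $H$-invariant, such that $\restr{\phi}{\K_0} = \Id$ and $\phi$ induces the identity on $H$.

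Under these conditions $\phi(h) h^{-1} \in \K$ for every $h\in H$, so I can define $\tau(h) := \phi(h) h^{-1}$. The homomorphism equation for $\phi$, combined with $\restr{\phi}{\K_0}=\Id$ and the abelianness of $\K$, forces the relation
\[
\tau(h_1 h_2) = \tau(h_1) + h_1 \cdot \tau(h_2),
\]
where $\K$ is an $H$-module via the conjugation action of $\Z$. Thus $\tau \in \Der(H,\K)$, and the class $[\tau] \in \VDer(\Z,\K)$ is independent of the choice of representative, since shrinking the domain only restricts $\tau$. This gives a well-defined map $\ker(\Phi) \to \VDer(\Z,\K)$.

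For the inverse, given $\tau \in \Der(H,\K)$ with $[\Z:H]<\infty$, I would define $\phi_\tau: \K \rtimes H \to \K \rtimes H$ by $\phi_\tau(a,h) = (a+\tau(h), h)$. A direct calculation shows $\phi_\tau$ is a homomorphism precisely by virtue of the derivation identity, and is inverted by $(b,h) \mapsto (b-\tau(h), h)$; the resulting commensuration clearly lies in $\ker(\Phi)$. The two constructions are visibly mutually inverse, and checking that the composition $\phi_\tau \circ \phi_{\tau'}$ corresponds to the pointwise sum $\tau + \tau'$ confirms the bijection is a group homomorphism. The main technical point, which is really just bookkeeping, is simultaneously arranging $\restr{\phi}{\K \cap \Delta_1} = \Id$ and the splitting $\Delta_1 = \K_0 \rtimes H$ inside the original domain; everything beyond this reduces to the routine calculations sketched above.
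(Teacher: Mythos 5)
Your proof takes essentially the same approach as the paper: extract the derivation $\tau(h) := \phi(h)h^{-1}$ from a commensuration in $\ker(\Phi)$, verify the derivation identity, and construct the inverse via $\phi_\tau(a,h) = (a+\tau(h),h)$. The paper does not normalize to $\Delta_1 = \Delta_2 = \K_0 \rtimes H$ with $\restr{\phi}{\K_0} = \Id$ as you do; this normalization is achievable but is a bit more than bookkeeping (one needs that $\K$ is abelian, so $\phi = \Id$ on $\K_1$ forces $\phi = \Id$ on every conjugate $t^{km}\K_1 t^{-km}$ and hence on a $\Z$-invariant finite-index subgroup, and then that $\K/\K_0$ has finite exponent to shrink $H$ so $\tau(H)\subseteq\K_0$), and it is not actually needed for the construction of $\tau$.
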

\begin{proof}
  Given any $[\phi]\in \ker(\Phi)$, find $m\in \Z$ so that $\phi(t^m)$
  is defined. Then define a map $\tau : m\Z \to \K$ by $\tau(t^{k}) =
  \phi(t^{k}) t^{-k}$ for any $k\in m\Z$. It is easy to check that
  $\tau$ is a derivation from $m\Z$ to $\K$, and that the assignment
  $[\phi] \mapsto \tau$ gives a homomorphism $\Comm(\Gamma) \to
  \VDer(\Z, \K)$. This assignment is clearly injective. On the other
  hand, if $\tau \in \Der(m\Z,\K)$ then setting $\phi(x t^\ell) = x
  \tau(t^\ell) t^\ell$ for $x\in \K$ defines an automorphism $\phi$ of
  $\Gamma_m \leq \Gamma$.
\end{proof}

Let $\Comm(\K)^{m\Z}$ denote the group of $m\Z$-equivariant
commensurations of $\K$. There are natural inclusions $\Comm(\K)^{m\Z}
\to \Comm(\K)^{n\Z}$ whenever $m\mid n$. Define
\begin{equation}
 \label{dsinftydefn} 
 \Comm_\infty(\K) := \varinjlim_m \Comm(\K)^{m\Z}.
\end{equation}

\begin{lemma} \label{appimphi} There is an isomorphism $\Phi(\ker(\Theta))
  \cong \Comm_\infty(\K)$.
\end{lemma}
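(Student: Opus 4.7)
The plan is to prove two inclusions of subgroups of $\Comm(\K)$: $\Phi(\ker(\Theta)) \subseteq \Comm_\infty(\K)$ and its reverse. The key tool is the observation that every finite-index subgroup of $\Gamma$ contains a \emph{standard} subgroup of the form $\K_m \rtimes m\Z$, where $\K_m$ is a $t^m$-invariant finite-index subgroup of $\K$. One concrete family realizing this is $\K_m := \ker(\lambda_m)$, where $\lambda_m : \K \to \F_2^m$ sends a sequence $x$ to the vector of periodic coordinate sums $\big(\sum_{i \equiv j \pmod m} x_i\big)_{j=0}^{m-1}$; a direct computation shows each $\K_m$ is $t^m$-invariant of index $2^m$, and that such subgroups are cofinal among finite-index subgroups of $\Gamma$.

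For the inclusion $\Phi(\ker(\Theta)) \subseteq \Comm_\infty(\K)$, I would take $\phi : \Delta_1 \to \Delta_2$ representing a class in $\ker(\Theta)$ and replace $\Delta_1$ with a standard subgroup $\K_m \rtimes m\Z$ contained in it. Because $[\phi] \in \ker(\Theta)$, the proof of Lemma \ref{appsurj} shows that the induced commensuration of $\Gamma/\K \cong \Z$ is the identity in $\Aut(\Z) = \Z/2\Z$, which forces $\phi(t^m) = c t^m$ for some $c \in \K$. For each $x \in \K_m$, the computation
\[
\phi(t^m x t^{-m}) = (ct^m)\, \phi(x)\, (ct^m)^{-1} = t^m \phi(x) t^{-m},
\]
valid because $\K$ is abelian, shows that $\phi|_{\K_m}$ commutes with conjugation by $t^m$. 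Consequently $\phi(\K_m)$ is itself $t^m$-invariant, and $\Phi([\phi])$ has a genuinely $m\Z$-equivariant representative, placing it in $\Comm(\K)^{m\Z} \subseteq \Comm_\infty(\K)$.

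For the reverse inclusion, given $[\psi] \in \Comm(\K)^{m\Z}$ represented by an $m\Z$-equivariant commensuration $\psi : \K' \to \K''$ between $t^m$-invariant finite-index subgroups of $\K$, I would define $\phi : \K' \rtimes m\Z \to \K'' \rtimes m\Z$ by $\phi(x t^{mk}) := \psi(x) t^{mk}$. The equivariance of $\psi$ makes $\phi$ a group isomorphism between these standard subgroups of $\Gamma$, and since $\phi$ fixes every power of $t^m$ we have $[\phi] \in \ker(\Theta)$ with $\Phi([\phi]) = [\psi]$. The only real obstacle is the initial bookkeeping around standard subgroups---verifying cofinality and passing to a common $m$ large enough for the arguments on both sides---after which the conjugation identity above does essentially all of the work.
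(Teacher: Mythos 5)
Your proof is correct and follows essentially the same strategy as the paper: both directions hinge on the computation $\phi(t^m x t^{-m}) = (ct^m)\phi(x)(ct^m)^{-1} = t^m\phi(x)t^{-m}$ using that $\K$ is abelian and $\phi(t^m)t^{-m}\in\K$, and the converse is the same direct construction of $\phi$ from an equivariant $\psi$. The "standard subgroup" scaffolding you introduce (with the explicit $\ker(\lambda_m)$ family) is sound and makes the bookkeeping explicit that the paper leaves implicit, but it is not a genuinely different argument.
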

\begin{proof}
  Supose $\alpha = \Phi( [\phi] )$ for some partial automorphism
  $\phi$ of $\Gamma$. Find $m\in \Z$ so that $t^m$ is in the domain of
  $\phi$. Define $x_0 = \phi(t^m)t^{-m} \in \K$. Then given any $x\in
  \K$, we have
\[
\phi(t^m x t^{-m}) = x_0 t^m \phi(x) t^{-m} x_0^{-1} = t^m \phi(x)
t^{-m}.
\]
From this we see that any $\alpha \in \Phi(\ker(\Theta))$ is
$m\Z$-equivariant for some $m$.

On the other hand, suppose $\beta : H_1 \to H_2$ is any partial
automorphism of $\K$ that is $m\Z$-equivariant. Define $\Gamma_m = \K
\rtimes \cyc{t^m}$, an index $m$ subgroup of $\Gamma$. The formula
$\phi(xt^\ell) = \alpha(x) t^\ell$ defines an automorphism $\phi \in
\Aut(\Gamma_m)$. Hence $[\phi]$ is a commensuration of $\Gamma$ which
evidently satisfies $\Phi( [\phi] ) = \beta$.
\end{proof}

\begin{proof}[Proof of Theorem \ref{lampthm}:] It is clear from the
  proof of Lemma \ref{appimphi} that the short exact sequence
\[ 1 \to \VDer(\Z, \K) \to \ker(\Theta) \to \Comm_\infty(\K) \to 1\]
splits. Putting together the results of Lemmas \ref{appsurj},
\ref{appkerphi}, and \ref{appimphi}, we have the semidirect product
description of Equation \ref{lampdesc}:
\[
\Comm(\Gamma) = ( \VDer(\Z, \K) \rtimes \Comm_\infty(\K) ) \rtimes
(\Z / 2\Z).
\]
The action of $\Comm_\infty(\K)$ on $\VDer(\Z, \K)$ is the action by
postcomposition. The factor of $\Z/2\Z$ preserves both $\VDer(\Z, \K)$
and $\Comm_\infty(\K)$, and acts on $\VDer(\Z, \K)$ by precomposition.
\end{proof}

It is not clear whether a more explicit description of
$\Comm_\infty(\K)$ exists, but we can describe some subgroups. For
example, the `diagonal embedding' construction of Proposition
\ref{commtoobig} shows that $\Comm_\infty(\K)$ contains the direct
limit
\[ \varinjlim_m \GL_m(\F_2),\]
where $\GL_m(\F_2)$ includes into $\GL_n(\F_2)$ diagonally whenever $m
\mid n$. Hence $\Comm_\infty(\K)$ contains every finite group.

Note that $\VDer(\Z, \K)$ contains every commensuration induced by
conjugation by some $a \in \K$. However, some elements of $\VDer(\Z,
\K)$ do not arise in this way. For example, any virtual derivation
$\tau : m\Z \to \K$ such that $\tau(t^m)$ is nontrivial in an odd
number of coordinates cannot arise from conjugation.

\bibliography{SAcomm.bib}{}

\providecommand{\bysame}{\leavevmode\hbox to3em{\hrulefill}\thinspace}
\providecommand{\MR}{\relax\ifhmode\unskip\space\fi MR }
\providecommand{\MRhref}[2]{%
  \href{http://www.ams.org/mathscinet-getitem?mr=#1}{#2}
}
\providecommand{\href}[2]{#2}
\begin{thebibliography}{Mar91}

\bibitem[BG06]{bauesgrunewald}
Oliver Baues and Fritz Grunewald, \emph{Automorphism groups of
  polycyclic-by-finite groups and arithmetic groups}, Publ. Math. Inst. Hautes
  \'Etudes Sci. (2006), no.~104, 213--268. \MR{2264837 (2008c:20070)}

\bibitem[Bog10]{bogopolski}
O.~Bogopolski, \emph{Abstract commensurators of solvable {B}aumslag-{S}olitar
  groups}, Max-Planck-Institute of Mathematics Preprint Series (2010), no.~110,
  8 pages.

\bibitem[GP99]{grunewaldplatonov}
Fritz Grunewald and Vladimir Platonov, \emph{Solvable arithmetic groups and
  arithmeticity problems}, Internat. J. Math. \textbf{10} (1999), no.~3,
  327--366. \MR{1688145 (2000d:20066)}

\bibitem[Hou62]{houghton}
C.~H. Houghton, \emph{On the automorphism groups of certain wreath products},
  Publ. Math. Debrecen \textbf{9} (1962), 307--313. \MR{0150213 (27 \#215)}

\bibitem[Mar91]{margulis}
G.~A. Margulis, \emph{Discrete subgroups of semisimple {L}ie groups},
  Ergebnisse der Mathematik und ihrer Grenzgebiete (3) [Results in Mathematics
  and Related Areas (3)], vol.~17, Springer-Verlag, Berlin, 1991. \MR{1090825
  (92h:22021)}

\bibitem[PR94]{platrap}
Vladimir Platonov and Andrei Rapinchuk, \emph{Algebraic groups and number
  theory}, Pure and Applied Mathematics, vol. 139, Academic Press Inc., Boston,
  MA, 1994, Translated from the 1991 Russian original by Rachel Rowen.
  \MR{1278263 (95b:11039)}

\bibitem[Rag72]{raghunathan}
M.~S. Raghunathan, \emph{Discrete subgroups of {L}ie groups}, Springer-Verlag,
  New York, 1972, Ergebnisse der Mathematik und ihrer Grenzgebiete, Band 68.
  \MR{0507234 (58 \#22394a)}

\bibitem[Seg83]{segalbook}
Daniel Segal, \emph{Polycyclic groups}, Cambridge Tracts in Mathematics,
  vol.~82, Cambridge University Press, Cambridge, 1983. \MR{713786 (85h:20003)}

\bibitem[Ser07]{serrebounds}
Jean-Pierre Serre, \emph{Bounds for the orders of the finite subgroups of
  {$G(k)$}}, Group representation theory, EPFL Press, Lausanne, 2007,
  pp.~405--450. \MR{2336645 (2008g:20114)}

\bibitem[Stu13]{studenmund}
Daniel Studenmund, \emph{{A}bstract commensurators of lattices in {L}ie
  groups}, 2013.

\bibitem[Wit97]{wittesarith}
Dave Witte, \emph{Archimedean superrigidity of solvable {$S$}-arithmetic
  groups}, J. Algebra \textbf{187} (1997), no.~1, 268--288. \MR{1425571
  (98g:20073)}

\end{thebibliography}
\bibliographystyle{amsalpha}

\end{document}